\newcommand{\Z}{\ensuremath{\mathbb{Z}}}
\newcommand{\Q}{\ensuremath{\mathbb{Q}}}
\newcommand{\R}{\ensuremath{\mathbb{R}}}
\newcommand{\F}{\ensuremath{\mathbb{F}}}
\newcommand{\calC}{\ensuremath{\mathcal{C}}}
\newcommand{\End}{\ensuremath{\mathrm{End}}}
\newcommand{\Jac}{\ensuremath{\mathrm{Jac}}}
\newcommand{\tr}{\ensuremath{\mathrm{tr}}}
\renewcommand{\mod}{\ensuremath{\,\,\mathrm{mod}\,\,}}
\renewcommand{\bar}[1]{\overline{#1}}
\newtheorem{thm}{Theorem}[section]
\newtheorem{prop}[thm]{Proposition}
\newtheorem{lemma}[thm]{Lemma}
\newtheorem{cor}[thm]{Corollary}
\newtheorem{remark}[thm]{Remark}
\newtheorem{exm}[thm]{Example}
\newtheorem{con}[thm]{Conjecture}
\title[Counter-examples to the Hasse Principle]{Counter-examples to the Hasse Principle among the twists of the Klein Quartic}
\author[Lorenzo Garc\'ia]{Elisa Lorenzo Garc\'ia}
\address{Elisa Lorenzo Garc\'ia: Institut de Math\'ematiques, Universit\'e de Neuch\^atel, Rue Emile-Argand 11, 2000, Neuch\^atel, Switzerland --- Laboratoire IRMAR, Office 602,
Universit\'e de Rennes 1, Campus de Beaulieu, 35042, Rennes Cedex, France} \email{elisa.lorenzo@unine.ch, elisa.lorenzogarcia@univ-rennes1.fr}
\address{Micha\"el Vullers:}
\email{mtivullers@gmail.com}
\author[Vullers]{Micha\"el Vullers \vspace{.5cm} \\ {\textit{D\MakeLowercase{edicated to }J\MakeLowercase{aap} T\MakeLowercase{op on the occasion of his 62th birthday.}}}}
\begin{document}
\begin{abstract}
    In this paper we inspect from closer the local and global points of the twists of the Klein quartic. For the local ones we use geometric arguments, while for the global ones we strongly use the modular interpretation of the twists. The main result is providing families with (conjecturally infinitely many) twists of the Klein quartic that at counter-examples to the Hasse Principle.
\end{abstract}
\maketitle
\section{Introduction}

The Klein quartic $\calC$ is the curve in the projective plane given by the equation $$X^3Y+Y^3Z+Z^3X=0$$ which is isomorphic to the modular curve $X(7)$ over the cyclotomic field $\mathbb{Q}(\zeta_7)$. 
It has a number of interesting properties which are deeply studied in \citep{Elkies98}. For instance it is a Hurwitz curve, meaning that it has the maximum number of automorphisms possible for its genus. In fact it is the smallest genus example of such a curve. 

Given a curve $C$ over a field $k$, we say that a curve $C'/k$ is a twist of $C$ if over an algebraic closure $\bar k$ of $k$ the curves $C$ and $C'$ are isomorphic. Geometrically speaking both curves are equal, however they may have very different arithmetic properties.
In this paper we focus our attention on the following question: Suppose that $C'$ is a twist of $\calC$,  \emph{does $C'$ violate the Hasse principle?} 
That is, does $C'$ have a point over every completion of $\Q$ but not over $\Q$ itself?

In \citep{2009arXiv0911.4537O} and \citep{2012arXiv1205.3424O} a similar question was asked and answered for the modular curves $X_0(N)$. The modularity of these curves is strongly used there to prove the existence of local and global points. In this paper we start by using a classification of all the twists of the Klein quartic \cite{Elisa2}, we use geometric arguments as well as Hensel's Lemma to deal with the existence of local points and we only use modular reasonings to study rational points. Those differ by the ones used in the aforementioned papers. Ther are based on the central result in \cite{JFern}. 

Our main result is the following:

\begin{thm}\label{thm:main}
    There are (conjecturally infinetely many) twists of the Klein quartic yielding counterexamples to the Hasse principle.
\end{thm}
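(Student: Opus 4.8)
The plan is to exploit the two ingredients the introduction advertises: on one side the explicit classification of all twists of $\calC$ from \cite{Elisa2}, on the other the modular interpretation of these twists together with the central result of \cite{JFern}. First I would recall, from the classification, that the twists of $\calC \cong X(7)$ are parametrized by $H^1(G_\Q, \Aut(\calC))$ with $\Aut(\calC) \cong \PSL_2(\F_7)$, and that a large family of them admits a modular description: to a suitable mod-$7$ Galois representation $\bar\rho\colon G_\Q \to \GL_2(\F_7)$ (equivalently, to an elliptic curve $E/\Q$ together with a symplectic-up-to-scalar basis of $E[7]$) one attaches a twist $\calC_{\bar\rho}$ whose rational points classify elliptic curves $E'/\Q$ with $E'[7] \cong E[7]$ as Galois modules (with the appropriate Weil-pairing condition). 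The rational points of $\calC_{\bar\rho}$ are then controlled by \cite{JFern}, which governs exactly when such a mod-$7$ representation arises from a global elliptic curve and when the associated twist has or lacks rational points.

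Next I would use the local analysis promised earlier in the paper: by the geometric arguments and Hensel's Lemma referenced in the introduction, one shows that for an explicit congruence class of parameters the twist $\calC_{\bar\rho}$ has points over $\R$ and over every $\Q_p$. The real place is handled by a direct argument on the (smooth plane quartic) model; the primes of good reduction are handled by Hensel lifting from $\F_p$-points, which exist by Weil's bound once $p$ is large enough relative to the genus $3$ (so $p \ge 17$ say suffices), leaving only a finite, explicitly checkable list of small and bad primes, whose treatment is where most of the bookkeeping lives. So the everywhere-local condition cuts out a non-empty (in fact positive-density) set of admissible $\bar\rho$.

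Then I would impose the global obstruction coming from \cite{JFern}: within the everywhere-locally-soluble family, one selects those $\bar\rho$ for which the only elliptic curves $E'/\Q$ with $E'[7]\simeq\bar\rho$ are, up to the relevant twisting, a single curve or pair of quadratic twists that — because of the Weil-pairing normalization defining $\calC_{\bar\rho}$ versus its quadratic twist — fails to give a rational point on this particular twist. Concretely one picks $E$ so that $X(7)$-twist parametrizing $E$'s mod-$7$ companions has no rational point although the companion twist does; \cite{JFern}'s classification of mod-$7$ congruences of elliptic curves is exactly what makes this checkable. Producing one explicit such $\bar\rho$ already proves existence of a counterexample; to get the conjectural infinitude I would let $E$ vary in a one-parameter family (e.g. a rational curve inside the relevant Hilbert modular / moduli surface, or quadratic twists of a fixed curve) for which the local conditions are preserved by a congruence and for which the relevant rank-zero / no-rational-$7$-congruence condition is expected — conditionally on standard conjectures (e.g. on ranks, or on the paucity of $7$-congruences) — to persist infinitely often.

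The main obstacle will be the global step: unconditionally ruling out \emph{all} rational points on a chosen twist $\calC_{\bar\rho}$ and simultaneously controlling the Weil-pairing sign that distinguishes it from its soluble companion. This amounts to a delicate interplay between \cite{JFern}'s description of which quadratic twist of a given elliptic curve realizes the prescribed mod-$7$ representation and the cocycle class defining our twist; getting a single unconditional example should be feasible by an explicit search, but upgrading to infinitely many genuinely requires either a clean one-parameter family where the obstruction is visibly uniform, or an appeal to a conjecture — which is why the theorem is stated with "conjecturally" in front of the infinitude.
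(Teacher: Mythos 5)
Your global step does not go through as stated, and it misses the key idea the paper actually uses. You propose to work with a representation $\bar\rho$ that \emph{is} realized by some elliptic curve $E/\Q$, and then to argue that of the two companion twists $X_\rho(7)$ and $X'_\rho(7)$ one is locally soluble but has no rational point because every elliptic curve realizing $\bar\rho$ lands on the other. This is exactly the ``delicate interplay'' you flag at the end, and you do not give any mechanism to decide it: Theorem~\ref{rationalpoints} only says the \emph{union} of non-cuspidal rational points on $X_\rho(7)$ and $X'_\rho(7)$ surjects onto the set of elliptic curves realizing $\rho$, so when $\rho$ is realized you cannot conclude anything about $X_\rho(7)$ alone without a separate symplectic/anti-symplectic argument. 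The paper sidesteps this entirely: in Proposition~\ref{Qptcase4} it chooses twists (cases~4 and~8) whose associated representation has an image size that, by Zywina's classification (Theorem~\ref{thm:possibilities}), cannot occur as $\bar\rho_{E,7}$ for \emph{any} elliptic curve $E/\Q$, CM or not. Then Theorem~\ref{rationalpoints} kills all non-cuspidal rational points with $j\ne0,1728$ on both companions at once, with no sign to control. You also do not address cuspidal points or $j=0,1728$, which Theorem~\ref{rationalpoints} leaves open and which the paper handles separately via Proposition~\ref{cusppoints}. Without Zywina's result (or some substitute ruling out realizability) and without the cusp argument, your proof does not close.

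Two smaller issues. On the local side your Hasse--Weil threshold is wrong: for genus $3$ you need $p+1>6\sqrt p$, i.e.\ $p\ge37$, not $p\ge17$; the remaining small primes are not just ``bookkeeping'' but require the Meagher--Top trace formula used in Proposition~\ref{local-points}, which you don't mention. And the conjectural infinitude in the paper does not come from letting $E$ vary in a one-parameter family of elliptic curves: it comes from parametrizing the twists by cubic polynomials $f=x^3+Bx+C$, imposing the explicit congruence conditions of Theorem~\ref{thm:cond}, and reducing infinitude to a conjecture on the factorization of $3^{3+6c'}+1$.
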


\section{Non-Archimedean local points}
In this section we study the existence of $\mathbb{Q}_p$-points for primes $p$ not dividing the discriminant or the order of the automorphism group of the twists of the Klein quartic.
\begin{prop}
Let $p$ be a prime different from $2$, $3$ or $7$ and let $C'/\F_p$ be a twist of the Klein quartic such that $p$ does not divide its discriminant. Then $C'(\F_p)\neq \emptyset$.
\label{local-points}
\end{prop}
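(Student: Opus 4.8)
The plan is to bound $\#C'(\F_p)$ from below by the Hasse--Weil estimate for all but finitely many primes, and to dispose of the remaining small primes by running through the finitely many twists, handling most cases geometrically via quotient maps. First, the geometric set-up. Since $\calC$ is a canonically embedded non-hyperelliptic curve of genus $3$, its geometric automorphism group $G:=\Aut(\calC_{\bar\F_p})\cong\PSL_2(\F_7)$ acts linearly, $G\subset\PGL_3$, so a twist of $\calC$ is a twist of the pair $(\P^2,\calC)$; as $\mathrm{Br}(\F_p)=0$ the ambient Severi--Brauer surface is $\P^2_{\F_p}$, hence $C'$ is again a smooth plane quartic over $\F_p$ of genus $3$ (the hypothesis $p\neq2,3,7$ ensures $C'$ is a genuine twist of the Klein quartic with $\Aut=G$, cf.\ \cite{Elisa2}). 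Since $\mathrm{Gal}(\bar\F_p/\F_p)=\hat\Z$ is procyclic, the same classification shows there are only finitely many $\F_p$-twists, indexed by the Frobenius-conjugacy classes of $G$ — the six conjugacy classes, of orders $1,2,3,4,7,7$, when $p\equiv1\pmod{7}$ — and the zeta function of the twist with representative $\sigma$ has the form
\[
L_{C'}(T)=\det\!\big(1-T\,\rho(\sigma)\circ\mathrm{Frob}_{\calC}\mid H^1_{\mathrm{et}}(\calC_{\bar\F_p},\Q_\ell)\big),
\]
where $H^1_{\mathrm{et}}(\calC_{\bar\F_p},\Q_\ell)\cong\chi_3\oplus\bar\chi_3$ is the sum of the two $3$-dimensional irreducible representations of $\PSL_2(\F_7)$; in particular $\#C'(\F_p)$ is determined by the values $\#\calC(\F_{p^r})$ for $r\le3$ together with the class of $\sigma$.

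For $p\ge37$ the Hasse--Weil bound already suffices: with $g=3$,
\[
\#C'(\F_p)\ \ge\ p+1-6\sqrt p\ =\ (\sqrt p-3)^2-8\ >\ 0 .
\]
This leaves the eight primes $p\in\{5,11,13,17,19,23,29,31\}$, for which even Serre's refinement $\#C'(\F_p)\ge p+1-g\lfloor2\sqrt p\rfloor$ is inconclusive (for $p=29$ it gives only $\#C'(\F_{29})\ge0$).

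For these primes I would go through the twists one by one. The trivial twist is $\calC$ itself, which contains $(1:0:0)$. If $\sigma$ has order $7$ then $\langle\sigma\rangle$ descends (at least when $p\equiv1\pmod 7$) and $C'/\langle\sigma\rangle$ is a curve of genus $0$, hence $\cong\P^1_{\F_p}$; its three branch points form an $\F_p$-rational divisor of degree $3$, and Frobenius must fix one of them, because otherwise one would obtain a cyclic degree-$7$ cover of $\P^1_{\F_p}$ branched along a single closed point of degree $3$, which forces the three ramification data to be equal, say all $\equiv a\pmod 7$ with $3a\equiv0\pmod 7$, hence $a\equiv0$ — a contradiction; the (unique, hence rational) preimage of a rational branch point is then an $\F_p$-point of $C'$. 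If $\sigma$ has order $2,3$ or $4$, the quotient $C'/\langle\sigma\rangle$ is instead a curve of genus $1$, which has an $\F_p$-point by Hasse, and one is reduced to lifting such a point through a cyclic cover of degree at most $4$; here — and for the remaining twisted-conjugacy classes when $p\not\equiv1\pmod 7$ — I would simply check positivity of $\#C'(\F_p)$ directly, from an explicit plane-quartic model of the twist or from the polynomial $L_{C'}(T)$ above.

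The hard part is precisely this small-prime verification: for $p\le31$ positivity of $\#C'(\F_p)$ is not automatic (pointless smooth plane quartics do exist over several finite fields in this range), so it cannot be deduced from the genus alone and must be extracted from the arithmetic of $\calC/\F_p$ together with the $\PSL_2(\F_7)$-module structure of $H^1$; equivalently, one carries out the finite check over the eight primes and the handful of twists in each. With that in place, the proposition follows.
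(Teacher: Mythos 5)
Your overall strategy — Hasse--Weil for large $p$, a finite check for the handful of remaining small primes — is the same as the paper's, but you leave that finite check unfinished; you yourself flag it as ``the hard part,'' and once the Weil bound has been invoked it is the entire content of the proposition. The geometric outline you propose for small $p$ also has real gaps. An $\F_p$-point on a genus-$1$ quotient $C'/\langle\sigma\rangle$ does \emph{not} automatically lift to an $\F_p$-point of $C'$: the fibre over a rational point of the quotient can be a single closed point of degree $|\sigma|$, so ``reduced to lifting through a cyclic cover of degree at most $4$'' is not a reduction. The order-$7$ ramification argument is only sketched for $p\equiv1\pmod 7$, and for the twisted conjugacy classes (when $p\not\equiv1\pmod 7$) you explicitly defer to an unspecified direct verification. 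As written the argument therefore does not establish $C'(\F_p)\neq\emptyset$ for $p\le 31$.

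What you are missing is the key input the paper uses to close exactly this gap: the Meagher--Top computation of the Frobenius trace on $\Jac(C')$ for \emph{every} twist $C'$ of the Klein quartic over $\F_p$. It shows $\tr(\pi)=0$ whenever $p\equiv3,5,6\pmod 7$ (so $\#C'(\F_p)=p+1>0$ immediately), and when $p\equiv1,2,4\pmod 7$ it expresses $\tr(\pi)$ as one of $3\tr(\pi_E)$, $0$, $-\tr(\pi_E)$, $\tr(\alpha\pi_E)$, $\tr(\pi_E)$ for one fixed CM elliptic curve $E$ with $\End(E)\cong\Z[\alpha]$, $\alpha^2+\alpha+2=0$. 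For the only small primes in the latter class, $p=11,23,29$, one then identifies $\pi_E$ explicitly inside $\Z[\alpha]\subset\Q(\sqrt{-7})$ and checks that none of the five listed values equals $p+1$. This is precisely the quantitative version of the ``$\PSL_2(\F_7)$-module structure of $H^1$'' you gesture at; without it, or some equally explicit trace computation in its place, the proof does not close.
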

\begin{proof}
If $p\geq 33$, the result follow immediately from the Hasse-Weil bound, so assume $p<33$. 
Let $\pi$ denote the Frobenius on $\Jac(C')$. Since $$\# C'(\F_p)=p+1-\tr(\pi),$$ it suffices to show that $\tr(\pi)\neq p+1$. 
In their paper \citep{Meagher2010} Meagher and Top show that
$$\tr(\pi)=\left\{\begin{array}{ll} 0 &\text{if } p\equiv 3,5,6 \mod 7\\ 3\tr(\pi_E), 0 ,-\tr(\pi_E) ,\tr(\alpha\pi_E)\text{ or }\tr(\pi_E)&\text{if }p\equiv 1,2,4\mod 7\end{array}\right.$$
where $\pi_E$ is the Frobenius of the elliptic curve $E/\F_p$ given by $y^2+xy = x^3+5x^2+7x$ and $\alpha\in\End(E)$ satisfies $\alpha^2+\alpha+2=0$. 

It follows immediately that if $p\equiv 3,5,6\mod 7$ then there is nothing to prove, so assume $p\equiv 1,2,4\mod 7$. This gives us only the cases $p=11$, $23$ or $29$. For which the elliptic curve $E$ is ordinary and  $\End(E)\cong \Z[\alpha]\subset \Q(\sqrt{-7})$ after making $\alpha=\frac{-1+\sqrt{-7}}{2}$. Moreover, after this identification, we get $\pi_E= \pm2\pm\sqrt{-7}$, $\pm4\pm\sqrt{-7}$, $\pm1\pm2\sqrt{-7}$ for $p=11$, $23$, $29$ respectively.

Straightforward substitution now yields $\tr(\pi)\neq p+1$ hence $C(\F_p)\neq \emptyset$.
\end{proof}

\begin{cor}
Let $p\neq 2,3,7$ be a prime. If $C'$ is a twist of the Klein quartic with $p$ not dividing its discriminant, then $C'(\Q_p)\neq \emptyset$.
\end{cor}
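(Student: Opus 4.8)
The plan is to deduce the corollary from Proposition~\ref{local-points} by a standard good-reduction-plus-Hensel argument. Since $p \neq 2,3,7$ and $p$ does not divide the discriminant of $C'$, the curve $C'$ has good reduction at $p$: there is a smooth projective model $\mathcal{C}'$ over $\Z_p$ whose generic fiber is $C'/\Q_p$ and whose special fiber $\bar{C}'$ is a smooth genus~$3$ curve over $\F_p$. Because $\mathcal{C}' \to \mathrm{Spec}\,\Z_p$ is proper and $\Z_p$ is a complete (in particular henselian) discrete valuation ring, the reduction map $C'(\Q_p) = \mathcal{C}'(\Z_p) \to \bar{C}'(\F_p)$ is surjective onto the smooth locus of the special fiber; since $\bar{C}'$ is smooth, every $\F_p$-point lifts. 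So it suffices to produce a point in $\bar{C}'(\F_p)$.

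Next I would observe that $\bar{C}'$ is itself a twist of the Klein quartic over $\F_p$: base-changing to $\bar\F_p$, the special fiber becomes isomorphic to the reduction of $\calC$, which is the Klein quartic over $\bar\F_p$ (here one uses $p \neq 2,3,7$ so that the Klein quartic has good reduction and remains a smooth Hurwitz curve). Moreover $p$ does not divide the discriminant of $\bar{C}'$, this being inherited from the chosen integral model. Hence Proposition~\ref{local-points} applies verbatim and gives $\bar{C}'(\F_p) \neq \emptyset$.

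Combining the two steps: pick any $\bar{P} \in \bar{C}'(\F_p)$; since $\bar{C}'$ is smooth over $\F_p$, the point $\bar P$ is a smooth point of $\mathcal{C}'/\Z_p$, so by Hensel's Lemma it lifts to $P \in \mathcal{C}'(\Z_p) = C'(\Q_p)$. Therefore $C'(\Q_p) \neq \emptyset$.

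The only genuinely delicate point is the bookkeeping around ``good reduction'': one must make sure that the hypothesis ``$p$ does not divide the discriminant of $C'$'' really does furnish a smooth model over $\Z_p$ with the Klein quartic as geometric special fiber, and that this notion of discriminant is the one used in the classification of twists in \cite{Elisa2} and in Proposition~\ref{local-points}. Once that compatibility is in place, the Hensel lifting is entirely routine, as is the identification of the special fiber with a twist over $\F_p$.
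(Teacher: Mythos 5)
Your argument is essentially identical to the paper's: reduce $C'$ modulo $p$ to obtain a twist of the Klein quartic over $\F_p$, invoke Proposition~\ref{local-points} to get an $\F_p$-point, and lift it by Hensel's Lemma (equivalently, smooth-properness of the model over $\Z_p$). You spell out the good-reduction bookkeeping more explicitly than the paper does, but the decomposition and the key ingredients are the same.
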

\begin{proof}
Let $\bar{C'}/\F_p$ be the reduction of $C'$, then $\bar{C'}$ is a twist of the Klein quartic over $\F_p$, in particular it is a smooth curve. 
From Proposition \ref{local-points} one has $\bar{C'}(\F_p)\neq\emptyset$ hence by Hensel's Lemma it follows that $C'(\Q_p)\neq \emptyset$.
\end{proof}
 
\section{Real points}

In this section, and before proving that all twists $C'$ of the Klein quartic have real points, we introduce a special twist:

\small{
$$
C_0 : x^4 +y^4+z^4 + 6(xy^3 +yz^3 +zx^3) -3 (x^2y^2+y^2z^2+z^2x^2)+3xyz(x+y+z) = 0.
$$}
\normalsize

We fix the isomorphism $\phi_0:\,C_0\rightarrow\mathcal{C}$ as in \cite[Sec. 6]{Elisa2}. The special property of this twist is that among all twists of the Klein quartic, the Klein quartic itself included, it is the one with the smallest field of definition of its automorphism group, namely $\mathbb{Q}(\sqrt{-7})$. This is why, in \cite{Elisa2} instead of directly computing the twists of $\mathcal{C}$, the twists of $C_0$ are computed. Of course, both sets are equal, but the algorithm developed in \cite{Elisa1} runs  faster while inputing $C_0$. Moreover, $C_0$ is isomorphic over $\mathbb{Q}$ to $X(7)$.

\begin{prop}
Let $C'$ be a twist of the Klein quartic $\calC$. Then $C'(\R)\neq \emptyset$.
\end{prop}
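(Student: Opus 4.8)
The plan is to use the fact that a smooth plane quartic over $\R$ has a real point unless its real locus is empty, and to rule out the empty case using the automorphisms. Concretely, a twist $C'$ of $\calC$ comes with an isomorphism $\psi: C' \to \calC$ defined over $\bar\Q$ (or over $\C$), and the associated cocycle $\sigma \mapsto \xi_\sigma = \psi \circ {}^\sigma\psi^{-1}$ takes values in $\Aut(\calC) \cong \PSL_2(\F_7)$. Over $\R$ the absolute Galois group is generated by complex conjugation $c$, so the twist is determined by a single element $\xi = \xi_c \in \Aut(\calC)$ with $\xi \cdot {}^c\xi = 1$. The real points of $C'$ correspond, via $\psi$, to the points $P \in \calC(\C)$ with ${}^cP = \xi^{-1}(P)$ — i.e. fixed points of the antiholomorphic involution $\tau = \xi^{-1} \circ c$ on $\calC(\C)$.

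First I would reduce to a finite computation: the real twists of $\calC$ are classified (up to isomorphism over $\R$) by the finitely many conjugacy classes of such involutions $\tau$, equivalently by $H^1(\mathrm{Gal}(\C/\R), \Aut(\calC))$, which is a finite set. One can read off the relevant list from the classification in \cite{Elisa2}: only finitely many of the twists there are actually defined over $\R$ (most require a larger field), so it suffices to exhibit a real point on each. Then for each such twist I would either write down an explicit real equation and exhibit a rational or real point directly, or — more uniformly — argue on $\calC(\C)$ that the antiholomorphic involution $\tau$ has a fixed point.

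For the uniform argument the key step is a topological/Lefschetz-type input. A smooth real plane quartic has genus $3$; an antiholomorphic involution $\tau$ of a compact Riemann surface of genus $g$ either has a nonempty fixed locus (a disjoint union of at most $g+1$ circles, the "ovals") or is fixed-point-free, and in the latter case the quotient is a non-orientable surface, which forces a congruence condition. The cleanest tool is that for a fixed-point-free $\tau$ the Lefschetz number $L(\tau) = \sum (-1)^i \tr(\tau^* \mid H^i)$ must vanish; for a genus-$g$ surface $L(\tau) = 2 - \tr(\tau^* \mid H^1)$, and one computes $\tr(\tau^* \mid H^1)$ from the action on $H^0(\calC, \Omega^1) \cong \C^3$ together with its conjugate. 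Since $\tau^* = \xi^* \circ (\text{conj})$ and the action of $\Aut(\calC)$ on the three-dimensional space of holomorphic differentials is the well-known irreducible representation of $\PSL_2(\F_7)$, one can check that no order-$\le 2$ element $\xi$ produces the value of the trace needed for $L(\tau)=0$. Hence $\tau$ always has a fixed point, so $C'(\R) \neq \emptyset$.

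The main obstacle is handling the possibly fixed-point-free involutions honestly: one must verify that for every admissible cocycle class $\xi$ the Lefschetz number is nonzero (or, in the cases where it vanishes, supply an \emph{ad hoc} argument producing a real point, e.g. via an explicit model from \cite{Elisa2}). A convenient shortcut, if it applies, is that $\calC$ and several of its twists carry a real rational point coming from a $2$-torsion or hyperflex structure that is Galois-stable; checking which of the finitely many twists this covers, and mopping up the remainder by the Lefschetz computation, is the bulk of the work. I expect the genus being odd ($g=3$) to help: for odd $g$ a fixed-point-free antiholomorphic involution would give a non-orientable quotient of odd Euler characteristic, which is already quite restrictive, and combined with the representation-theoretic constraint on $\xi^*$ it should close every case.
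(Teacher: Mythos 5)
Your reduction to a finite computation via $H^1(\mathrm{Gal}(\C/\R),\Aut(\calC))$ and the classification in \cite{Elisa2} is the right first move, and matches the paper's. But the ``uniform'' Lefschetz argument you propose to avoid checking each case explicitly cannot work, for two reasons. First, the formula $L(\tau)=2-\tr(\tau^*\mid H^1)$ is the one for an \emph{orientation-preserving} map; an antiholomorphic involution is orientation-reversing, so $\tau^*$ acts by $-1$ on $H^2$ and the correct formula is $L(\tau)=-\tr(\tau^*\mid H^1)$. Second, and more fatally, for an involution the Lefschetz number equals the Euler characteristic of the fixed locus, and the fixed locus of an antiholomorphic involution on a compact Riemann surface is always a (possibly empty) disjoint union of circles, hence has Euler characteristic zero. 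So $L(\tau)=0$ in \emph{every} case, fixed points or not, and the Lefschetz number carries no information here. Your parity heuristic is also backwards: real structures with empty real locus exist in every genus, odd or even (already in genus $0$, witness the conic $x^2+y^2+z^2=0$), so $g=3$ being odd does not help.

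What this means is that the ``fallback'' you mention --- exhibiting an explicit real point on each of the finitely many $\R$-twists --- is not a mop-up for a few stray cases; it \emph{is} the whole proof. The paper does exactly this: the classification of \cite{Elisa2} over $K=\R$ gives only two twists up to $\R$-isomorphism, namely $\calC$ itself and the particular twist $C_0$ introduced earlier; $\calC$ has the rational point $(1:0:0)$, and plugging $(x:1:1)$ into $C_0$ gives a degree-$4$ real polynomial in $x$ that is easily seen to have a real root. Your proposal would be salvageable by simply carrying out that two-case check and dropping the Lefschetz machinery entirely.
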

\begin{proof}
By the classification of twists of the Klein quartic in \cite{Elisa2} when $K=\R$ we get that there are only two twists of the Klein quartic up to $\mathbb{R}$-isomorphism, the Klein quartic itself and the twist $C_0$. The Klein quartic has for instance the rational point $(1:0:0)$. Plugging $(x:1:1)$ into $C_0$ yields a degree $4$ polynomial in $x$ over $\R$ which has real solutions, hence $C_0(\R)\neq\emptyset$. 
\end{proof}

\section{Rational points}
In order to study the rational points of the twists of the Klein quartic, we start by recalling some definitions and rephrasing the main theorem of \citep{JFern}.

Let $\chi_p$ be the quadratic character of $G_\Q$ obtained from the $p$-th cyclotomic character.
The Galois action on the $p$-torsion points of an elliptic curve $E/\Q$ produces a representation
$\bar{\rho}_{E,p}:\, G_k\rightarrow\operatorname{PGL}_2(\F_p)$,
determined up to conjugation by the isomorphism class of $E$ unless the $j$-invariant
of $E$ is $0$ or $1728$, and with determinant $\chi_p$. Going the other way, we say that a
representation $\rho:\, G_\Q\rightarrow\operatorname{PGL}_2(\F_p)$
is realized by an elliptic curve $E/\Q$ if $\bar{\rho}_{E,p} =\rho$, where this last equality is considered
up to conjugation inside $\operatorname{PGL}_2(\F_p)$.

The automorphism group of $X(p)$ is naturally (via the modular interpretation) isomorphic to $\operatorname{PSL}_2(\F_p)$ and it is defined over the only quadratic field $k_p$ contained in the cyclotomic field  $\Q(\zeta_p)$. Fix $V:=\begin{pmatrix}0 & -v\\1 & 0\end{pmatrix}$ for $v$ a non-square in $\F_p^\times$. Then, by \cite[Prop. 2.1]{JFern} we have that the action of the non-trivial element $\tau$ of $\operatorname{Gal}(k_p/\Q)$ on an element $g\in \operatorname{Aut}(X(p))\simeq\operatorname{PSL}_2(\F_p)$ is given by $g\mapsto VgV$.

A cocycle $\xi\in\operatorname{H}^1(G_\Q,\operatorname{Aut}(X(p)))$ defining a twist of $X(p)$ can be seen as a representation $\rho:\,G_\Q\rightarrow\operatorname{Aut}(X(p))\rtimes\operatorname{Gal}(k_p/\Q)\simeq\operatorname{PGL}_2(\F_p):\,\sigma\mapsto(\xi(\sigma),\bar{\sigma})$, where the {last isomorphism} is given by sending $(g,1)$ to $g$ and $(1,\tau)$ to $V$. Going the other way around, starting by a representation $\rho:\,G_\Q\rightarrow\operatorname{PGL}_2(\F_p)$ whose determinant is the quadratic character $\chi_p$, we can define the cocycle $\xi$ given by the first projection of $\rho$ by using the previous isomorphism, i.e. setting $\xi(\sigma)=\pi_1(\rho(\sigma))$. This cocycle produces a twist $X_\rho(p)$. We can as well define a cocycle $\xi'(\sigma)=\pi_1(\sigma^{-1})^t$ defining a twist $X'_\rho(p)$. These two twists, defined over the same field, are equivalent under the conditions in \cite[Lemma 2.1]{JFern}.

\begin{thm}(\cite[Thm. 2.1]{JFern})
The map $X(p)\rightarrow X(1)$ define a surjective map from the set of non-cuspidal rational points with $j\neq0,1728$ on the curves $X_\rho(p)$ and $X'_\rho(p)$ to the set of isomorphism classes of elliptic curves defined over $\Q$ realizing $\rho$. 
\label{rationalpoints}
\end{thm}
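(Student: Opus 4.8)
The plan is to run the standard twisting dictionary against the moduli interpretation of $X(p)$. Recall that a non-cuspidal $\bar{\Q}$-point of $X(p)$ with $j\neq 0,1728$ corresponds to a pair $(E,\beta)$, where $E$ is an elliptic curve over $\bar{\Q}$ and $\beta\colon(\Z/p\Z)^2\xrightarrow{\ \sim\ }E[p]$ is a basis with a prescribed value of the Weil pairing, taken up to the simultaneous sign change induced by $[-1]_E$; this is exactly why $\Aut(X(p))\simeq\PSL_2(\F_p)$ and not $\SL_2(\F_p)$, and why $j\in\{0,1728\}$ and the cusps are excluded (there $\Aut(E)\supsetneq\{\pm 1\}$ and the dictionary degenerates). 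The $j$-map sends $(E,\beta)\mapsto j(E)$; it is $\Aut(X(p))$-invariant, hence descends to morphisms $X_\rho(p)\to X(1)$ and $X'_\rho(p)\to X(1)$. Finally, fixing a $\bar{\Q}$-isomorphism $\varphi\colon X_\rho(p)\to X(p)$ with $\varphi^{\sigma}\circ\varphi^{-1}=\xi(\sigma)$ for all $\sigma\in G_\Q$, the map $\varphi$ identifies $X_\rho(p)(\Q)$ with the set of $P\in X(p)(\bar{\Q})$ satisfying ${}^{\sigma}P=\xi(\sigma)\cdot P$ for all $\sigma\in G_\Q$ (with the cocycle normalization as in the text), and similarly $X'_\rho(p)(\Q)$ with the analogous set for $\xi'$.

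I would first check that the image is contained in the claimed set. Take $P\in X_\rho(p)(\Q)$ non-cuspidal with $j(P)\neq 0,1728$ and write $\varphi(P)\leftrightarrow(E,\beta)$. Applying $\sigma$ to ${}^{\sigma}P=\xi(\sigma)\cdot P$ and translating back through the moduli interpretation yields an isomorphism $E^{\sigma}\xrightarrow{\ \sim\ }E$ carrying $\beta^{\sigma}$ to $\beta\circ\xi(\sigma)$ (up to sign). Since $j(E)=j(P)\in\Q\setminus\{0,1728\}$ there is an elliptic curve $E_0/\Q$ with $j(E_0)=j(E)$, unique up to quadratic twist, and $E$ is a $\bar{\Q}$-form of $E_0$; transporting $\beta$ along this isomorphism and unwinding the cocycle relation shows that the projective representation $\bar{\rho}_{E_0,p}$ equals $\rho$ up to conjugation in $\PGL_2(\F_p)$ (its determinant being $\chi_p$ is forced by the Weil-pairing normalization of $\beta$). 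Hence $E_0$ realizes $\rho$. The same argument with $\xi'$ in place of $\xi$ shows that rational points of $X'_\rho(p)$ also map to curves realizing $\rho$: the cocycle $\xi'$, built from the inverse transpose, records the Galois action on the Weil-dual basis, which is the same representation projectively.

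For surjectivity, I would start from $E/\Q$ realizing $\rho$. Choosing a basis $\beta$ of $E[p]$ with the prescribed Weil pairing produces $P\in X(p)(\bar{\Q})$ with ${}^{\sigma}P=\bar{\rho}_{E,p}(\sigma)\cdot P$, and by hypothesis $\bar{\rho}_{E,p}(\sigma)$ is $\PGL_2(\F_p)$-conjugate to $\rho(\sigma)$. Replacing $\beta$ by $\beta\circ g$ for a suitable $g$ (that is, replacing $P$ by $g^{-1}P$) one can arrange that either ${}^{\sigma}P=\xi(\sigma)\cdot P$ for all $\sigma$, or ${}^{\sigma}P=\xi'(\sigma)\cdot P$ for all $\sigma$; in the first case $\varphi^{-1}(P)\in X_\rho(p)(\Q)$, in the second the analogous point lies in $X'_\rho(p)(\Q)$, and in either case it maps to $j(E)$ under the descended $j$-map. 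This would give surjectivity onto the set of isomorphism classes realizing $\rho$.

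The hard part will be the group-theoretic bookkeeping underlying both directions: reconciling the abstract $\PGL_2(\F_p)$-valued $\rho$ (normalized by $\det\rho=\chi_p$ and described via the semidirect product $\PSL_2(\F_p)\rtimes\operatorname{Gal}(k_p/\Q)$ and the formula $g\mapsto VgV$ recalled above) with the concrete Galois action on $E[p]$, and in particular seeing precisely why \emph{both} twists $X_\rho(p)$ and $X'_\rho(p)$ are needed. The point is that the Weil pairing determines the level structure only up to symplectic similitude, so a representation and its inverse transpose (equivalently, the Galois module $E[p]$ and its Cartier dual $\Hom(E[p],\mu_p)$) need not be conjugate by a single element compatible with the semidirect structure; the passage from $\bar{\rho}_{E,p}$ to $\rho$ may therefore require the chirally reflected cocycle $\xi'$ rather than $\xi$, which is exactly the phenomenon governed by \cite[Lemma 2.1]{JFern}. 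Controlling the excluded loci $j\in\{0,1728\}$ and the cusps, where $\Aut(E)\neq\{\pm1\}$ and the moduli dictionary must be corrected, is the other delicate point.
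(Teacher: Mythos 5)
The paper does not prove this statement at all: it is quoted verbatim from \cite[Thm.~2.1]{JFern}, so there is no internal proof to compare against, and your sketch has to be judged as a self-contained argument. Its overall shape (moduli interpretation of $X(p)$, the twisting dictionary identifying $X_\rho(p)(\Q)$ with points $P$ satisfying ${}^\sigma P=\xi(\sigma)P$, then the two inclusions) is the expected route, but the decisive step is asserted rather than proved, and you say so yourself. In the surjectivity direction, the claim that after replacing $\beta$ by $\beta\circ g$ one can arrange ${}^\sigma P=\xi(\sigma)P$ for all $\sigma$, \emph{or else} ${}^\sigma P=\xi'(\sigma)P$ for all $\sigma$, is precisely the content of the theorem (why these two particular twists suffice), and it requires the computation you defer: comparing $\bar{\rho}_{E,p}$ with $\rho$ through the identification $\operatorname{PSL}_2(\F_p)\rtimes\operatorname{Gal}(k_p/\Q)\simeq\operatorname{PGL}_2(\F_p)$, $(1,\tau)\mapsto V$, and showing that the dichotomy is governed by the square class of $\det c$ for an element $c\in\operatorname{PGL}_2(\F_p)$ conjugating $\bar{\rho}_{E,p}$ to $\rho$ (equivalently, by whether the induced identification of level structures is symplectic or anti-symplectic for the Weil pairing). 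The containment direction has the same deferred bookkeeping, since identifying the descended representation with $\rho$ (and not merely with some conjugate in a larger group) again uses the $V$-twisted action of $\operatorname{Gal}(k_p/\Q)$ on $\Aut(X(p))$.

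Moreover, the heuristic you offer for why both twists are needed is off the mark, which suggests the bookkeeping is not yet under control. In dimension $2$ the inverse-transpose is an \emph{inner} operation: for $g\in\operatorname{GL}_2$ one has $w\,g\,w^{-1}=\det(g)\,(g^{-1})^{t}$ with $w=\begin{pmatrix}0&1\\-1&0\end{pmatrix}\in\operatorname{SL}_2$, so a $\operatorname{PGL}_2(\F_p)$-valued representation is always $\operatorname{PSL}_2(\F_p)$-conjugate to its inverse-transpose, and likewise $E[p]$ is isomorphic to its Cartier dual $\Hom(E[p],\mu_p)$ as a Galois module via the Weil pairing. So the distinction between $X_\rho(p)$ and $X'_\rho(p)$ is not that ``a representation and its inverse transpose need not be conjugate''; it is that the cocycles $\xi$ and $\xi'$ need not be cohomologous for the twisted Galois action on $\Aut(X(p))$, the obstruction being exactly the square/non-square determinant of the intertwiner mentioned above (this is what \cite[Lemma 2.1]{JFern} controls). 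To turn your outline into a proof you would need to carry out this determinant/symplectic-type analysis explicitly, rather than cite it; as written, the proposal reproduces the statement's framework but leaves its essential content unproved.
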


It is important to note that not every twist of $X(7)$ arises as $X_E(7):=X_{\bar{\rho}_{E,7}}(7)$ or $X_E^-(7):=X'_{\bar{\rho}_{E,7}}(7)$ for some elliptic curve $E$, in particular $\mathcal{C}$ itself does not arise from an elliptic curve. This type of twists have brought the attention of the community: explicit equations for $X_E(7)$ are given in \cite{halberstadt2003}, and the study of their rational points for some particular examples are in \cite{Stoll2005}, which is used to solve a particularly difficult diophantine equation.

In the case in which the elliptic curve $E$ does not have CM, we follow \cite[Thm. 1.5]{Zywina2015} to determine the possible images of the representations $\bar{\rho}_{E,7}$.

\begin{thm}\label{thm:possibilities} Let $E$ be an elliptic curve defined over $\Q$. If $E$ does not have CM, then the image of $\bar{\rho}_{E,7}$ is isomorphic to $S_3$ (then $j(E)=3^3\cdot5\cdot7^5/2^7$) or has cardinality $14$, $42$ or $336$. If $E$ has CM then $\bar{\rho}_{E,7}$ has cardinality $7$ or larger than $12$.
\end{thm}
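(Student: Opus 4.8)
The plan is to derive the statement from the known classification of the possible images of $\bar{\rho}_{E,7}$, treating the non-CM and the CM cases by different means. In both cases one works with $\rho_{E,7}(G_\Q)\subseteq\GL_2(\F_7)$ and reads off the image $\bar{\rho}_{E,7}(G_\Q)$ in $\PGL_2(\F_7)\cong\GL_2(\F_7)/\F_7^\times I$ (recall $|\PGL_2(\F_7)|=336$), so the translation from one to the other amounts to computing the scalar subgroup $\rho_{E,7}(G_\Q)\cap\F_7^\times I$ and identifying the quotient up to isomorphism via Dickson's list of subgroups of $\PGL_2(\F_7)$.

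\emph{Non-CM case.} Here the essential input is \cite[Thm.~1.5]{Zywina2015}, which lists, up to conjugacy, every subgroup $G\subseteq\GL_2(\F_7)$ with surjective determinant that arises as $\rho_{E,7}(G_\Q)$ for some non-CM $E/\Q$, together with the $j$-invariants realizing it (a single explicit value, or an infinite family on a genus-$0$ modular curve). What is left is group-theoretic bookkeeping. The subgroups $G$ with $7\mid|G|$ are the Borel subgroups (the full Borel, and a proper sub-Borel) and $\GL_2(\F_7)$ itself, and one checks that their images in $\PGL_2(\F_7)$ have cardinalities $42$, $14$ and $336=|\PGL_2(\F_7)|$ respectively. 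The remaining groups have order prime to $7$ and sit inside the normalizer of a Cartan subgroup; Zywina's table shows that for non-CM $E/\Q$ the only one that occurs corresponds to the single $j$-invariant $3^3\cdot5\cdot7^5/2^7$, and a direct matrix computation (its preimage in $\GL_2(\F_7)$ is a group of order $36$ inside the split-Cartan normalizer that meets the scalars in a subgroup of order $6$) identifies $\bar{\rho}_{E,7}(G_\Q)$ in that case as $S_3$. Collecting the cases gives exactly the four possibilities of the first assertion, including the value of $j$.

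\emph{CM case.} Here one argues directly, since \cite{Zywina2015} concerns non-CM curves. If $E/\Q$ has CM then $j(E)$ is one of the thirteen rational CM $j$-invariants, with CM by an order $\O$ of class number one in an imaginary quadratic field $K$, and $\rho_{E,7}(G_\Q)$ normalizes the Cartan subgroup of $\GL_2(\F_7)$ attached to $\O\otimes\F_7$, with index at most $2$, \emph{except} when $7$ ramifies in $\O$ --- precisely for $K=\Q(\sqrt{-7})$, i.e. $j\in\{-3375,\,16581375\}$ --- where $E$ acquires a rational $7$-isogeny and the image lies in a Borel subgroup. The plan is to go through the thirteen cases one by one, using that $\rho_{E,7}|_{G_K}$ is the reduction mod $7$ of the CM Gr\"ossencharacter of $E$ (equivalently, reading off the tabulated mod-$7$ image of each of these curves and their twists), to pin down $\bar{\rho}_{E,7}(G_\Q)$ and thereby confirm the stated cardinality constraint in each case.

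The non-CM bookkeeping is routine once \cite[Thm.~1.5]{Zywina2015} is in hand, and identifying the sporadic $S_3$-image is a single explicit computation. The main obstacle is the CM case: it is not covered by the cited theorem, and the crude bound ``index at most $2$ in a Cartan normalizer'' is by itself too weak, since for the Cartan subgroups of $\GL_2(\F_7)$ the normalizer modulo scalars can be as small as a group of order $12$; so one really has to compute the genuine image for the split-Cartan $j$-invariants (and for the ramified ones) rather than merely bound it. This is the step I would treat most carefully, cross-checking the Gr\"ossencharacter computation against the known mod-$7$ data for the thirteen CM curves.
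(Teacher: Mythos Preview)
Your non-CM argument is essentially the paper's: both invoke \cite[Thm.~1.5]{Zywina2015} for the $\GL_2(\F_7)$-image and then read off the $\PGL_2(\F_7)$-image; you are simply more explicit about the group-theoretic bookkeeping (the Borel cases giving $14$, $42$, $336$, and the sporadic normalizer-of-split-Cartan case giving $S_3$ at the stated $j$-invariant), which the paper leaves implicit.

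For the CM case your route genuinely differs. The paper does not touch Gr\"ossencharacters or do any theoretical computation: it observes that there are finitely many CM $j$-invariants over $\Q$, notes that quadratic twisting does not change the image in $\PGL_2$, and then simply looks up the mod-$7$ images in the LMFDB \cite{lmfdb}, finding them maximal when $7$ is unramified in the CM order and of cardinality $14$ when the CM field is $\Q(\sqrt{-7})$. Your plan (compute via the CM Gr\"ossencharacter and cross-check against tabulated data) would of course also work and is more self-contained, but the paper's shortcut is to outsource the entire computation to the database. Either approach suffices; the paper's is quicker to write but relies on an external resource, while yours would make the argument independent of the LMFDB at the cost of carrying out the thirteen cases explicitly.
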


\begin{proof} The aforementioned reference classify the possibilities for $\rho_{E,7}:\,G_\Q\rightarrow\operatorname{GL}_2(\F_7)$ when $E/\Q$ does not have CM. 

There are only a finite number of $E/\Q$ with CM up to $\bar{\Q}-$isomorphism. When $7$ does not divide the discriminant of the CM-field and up to quadratic twists, the corresponding representation is maximal according to the LMFDB \cite{lmfdb}. When $7$ divides the discriminant then the CM-field is $\Q(\sqrt{-7})$ and again according to the LMFDB, and up to quadratic twist, $\bar{\rho}_{E,7}$ has cardinality $14$.
\end{proof}

Twists not arising from elliptic curves do not have rational points apart from eventually the cusps.

\begin{prop}
Let $\phi\colon C'\rightarrow C_0$ be a twist defined over a field $L$. If $L\cap \Q(\zeta_7)=\Q(\sqrt{-7})$, then $C'$ has no rational cusps. 
\label{cusppoints}
\end{prop}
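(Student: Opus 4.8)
The plan is to analyze the cusps of the twist $C'$ via the modular interpretation recalled above, using the Galois-equivariant description of $\Aut(X(7)) \simeq \PSL_2(\F_7)$ and the action of $\tau \in \Gal(k_7/\Q)$. Recall $k_7 = \Q(\sqrt{-7})$. A cusp of $X(7)$ corresponds (under the modular interpretation) to a point in the orbit of the standard cusp under $\PSL_2(\F_7)$, and this set of cusps carries a natural Galois action: $G_\Q$ acts through its quotient $\Gal(k_7/\Q)$ on the geometric cusps, via the formula $g \mapsto VgV$ for the nontrivial element $\tau$, where $V = \left(\begin{smallmatrix}0 & -v\\ 1 & 0\end{smallmatrix}\right)$ with $v$ a non-square in $\F_7^\times$. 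For the twist $C'$ defined by the cocycle $\xi$ coming from the representation $\rho\colon G_\Q \to \PGL_2(\F_7)$ with first projection $\xi$, the rational cusps of $C'$ are exactly the cusps of $X(7)$ that are fixed by the twisted Galois action $\sigma \mapsto \xi(\sigma)\cdot({}^{\bar\sigma}(-))$; equivalently, a cusp $c$ gives a rational point on $C'$ iff $\rho(\sigma)(c) = c$ for all $\sigma \in G_\Q$, where now $\rho(\sigma) \in \PGL_2(\F_7)$ acts on cusps through its image.

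First I would make explicit the set of cusps and the $\PGL_2(\F_7)$-action on it: the cusps of $X(7)$ form a single $\PSL_2(\F_7)$-orbit isomorphic as a $\PSL_2(\F_7)$-set to $\P^1(\F_7)$ (the $8$ cusps), with $\PGL_2(\F_7)$ acting by the usual fractional linear action. Under the hypothesis $L \cap \Q(\zeta_7) = \Q(\sqrt{-7})$, the composite $G_L \to \Gal(k_7/\Q)$ is \emph{surjective}: indeed $L$ does not contain $\Q(\zeta_7)$, so $L\cdot\Q(\zeta_7)/L$ is nontrivial, hence $\bar\sigma$ runs over all of $\Gal(k_7/\Q)$ as $\sigma$ runs over $G_L$ — wait, I must be careful: $C'$ is a twist over $L$, so "rational cusp" means $L$-rational, and I want to show there are none. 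So the relevant group is $G_L$, and the condition $L\cap\Q(\zeta_7) = \Q(\sqrt{-7})$ says $\sqrt{-7}\in L$ but $\zeta_7 \notin L$, i.e. the image of $G_L$ in $\Gal(k_7/\Q)$ is trivial but the image of $G_L$ in $\Gal(\Q(\zeta_7)/\Q)/\{\pm1\}$... hmm, actually $\sqrt{-7}\in L$ forces $\bar\sigma = 1$ for all $\sigma\in G_L$, so the twisting by $\tau$ plays no role over $L$ and the cusp action of $G_L$ on $X(7)_{\bar L}$ is simply through $\xi|_{G_L} = \pi_1\circ\rho|_{G_L}$, landing in $\PSL_2(\F_7)$ but with image governed by $\rho(G_\Q)$. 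The point is then: a cusp is $L$-rational on $C'$ iff it is fixed by $\xi(G_L) \subseteq \PSL_2(\F_7)$, so it suffices to show $\xi(G_L)$ (equivalently the relevant subgroup of $\PSL_2(\F_7)$ attached to this twist) has no fixed point on $\P^1(\F_7)$; and here one invokes the structure theorem on the possible images — Theorem \ref{thm:possibilities} together with the fact that $C'$ does not arise from an elliptic curve in the "geometric cusp not defined over $\Q$" regime, which is precisely what the hypothesis on $L$ encodes. An element of $\PSL_2(\F_7)$ fixing a point of $\P^1(\F_7)$ lies in a Borel, so I would show that the image subgroup is not contained in any Borel, using that the determinant-$\chi_7$ condition plus $\sqrt{-7}\in L$ but $\Q(\zeta_7)\not\subseteq L$ forces the image of $\rho|_{G_L}$ to be non-abelian / irreducible.

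The main obstacle I anticipate is pinning down exactly which subgroups of $\PSL_2(\F_7)$ can occur as the "cusp-action image" of a twist $C'/L$ satisfying the hypothesis, and showing each such subgroup is fixed-point-free on $\P^1(\F_7)$ — i.e. ruling out that the image sits inside a Borel (the stabilizer of a cusp). The subtlety is that $C'$ need not come from an elliptic curve, so Theorem \ref{rationalpoints} and Theorem \ref{thm:possibilities} do not apply directly; I will need the classification of all twists from \cite{Elisa2}, combined with the observation that the hypothesis $L \cap \Q(\zeta_7) = \Q(\sqrt{-7})$ is equivalent to saying the associated $\PGL_2(\F_7)$-representation, restricted to $G_L$, has image containing the Galois-descent obstruction that prevents it from being the mod-$7$ representation of an elliptic curve over $L$ — and in that situation the image cannot be cyclic, hence cannot fix a point of $\P^1(\F_7)$. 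Once the group-theoretic statement "image is not contained in a Borel" is established, the conclusion that $C'$ has no $L$-rational cusp is immediate, since the $\Gal(\bar L/L)$-orbit of each of the $8$ geometric cusps then has size $> 1$.
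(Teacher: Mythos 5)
Your approach is genuinely different from the paper's, and considerably more elaborate — but it has both a factual error and an acknowledged gap that leave it incomplete, whereas the paper's proof is a three-line elementary observation.

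The paper's argument is this: the isomorphism $\phi\colon C'\to C_0$ is defined over $L$, so it maps $C'(\Q)$ into $C_0(L)$, carrying cusps to cusps. The cusps of $C_0\simeq_\Q X(7)$ all have field of definition contained in $\Q(\zeta_7)$, and none of them is defined over $\Q(\sqrt{-7})$. Since $L\cap\Q(\zeta_7)=\Q(\sqrt{-7})$, a cusp defined over $L$ would be defined over $\Q(\sqrt{-7})$, which is impossible. Hence $C_0$ has no $L$-rational cusps and $C'$ has no $\Q$-rational cusps. No group theory, no mention of $\PGL_2(\F_7)$-orbits, no analysis of the image of $\rho$.

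Your route — describe the twisted Galois action on the geometric cusps of $X(7)$ and show the image of the cocycle has no fixed point — is a legitimate alternative in principle, and it is morally the same computation dressed in the cocycle language. But as written it does not close. First, a factual problem: the cusps of $X(7)$ do not form the $8$-element set $\P^1(\F_7)$; there are $24$ of them, and the stabilizer of a cusp in $\PSL_2(\F_7)$ is a unipotent subgroup of order $7$, not a Borel of order $21$. So "image not contained in a Borel" is not the right criterion; you would need the stronger "image not contained in the normalizer of a cusp stabilizer" or, more simply, that the twisted $G_\Q$-action has no fixed cusp. Second, and more importantly, you explicitly flag the key group-theoretic step ("pinning down exactly which subgroups of $\PSL_2(\F_7)$ can occur\ldots and showing each such subgroup is fixed-point-free") as an unresolved obstacle; this is precisely the content of the proposition and it is left unproved. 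Finally, you waver on whether "rational" means $\Q$-rational or $L$-rational. In the paper the conclusion needed is about $\Q$-rational cusps of $C'$; what is extracted from the hypothesis $L\cap\Q(\zeta_7)=\Q(\sqrt{-7})$ is that $C_0$ has no $L$-rational cusps, and the isomorphism $\phi$ over $L$ transports the two statements. Your version conflates these and settles on the $L$-rational reading, which, even if it happened to be established, would then still need the transport-via-$\phi$ step to reach the statement as used later in the paper. In short: interesting idea, but the paper's one-observation argument both sidesteps the group theory and is actually complete.
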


\begin{proof}
The cusps of $C_0$ are defined over $\Q(\zeta_7)$ but not over $\Q(\sqrt{-7})$. An isomorphism $\phi\colon C'\rightarrow C_0$ maps $C'(\Q)$ to $C_0(L)$. 
If $C_0$ has no cusps over $L$, then $C'$ has no cusps over $\Q$. 
\end{proof}

\section{Counter-examples to the Hasse Principle}
In the classification of twists of the Klein quartic in \cite{Elisa2} there are $11$ different families (or cases) of twists accordingly to the isomorphism class of the Galois group of the field of definition of the twists. The families 6. and 10. do not appear when the Klein quartic is considered to be defined over $\Q$. And the family $11.$ contains all the twists $X_E(7)$ when $\bar{\rho}_{E,7}$ is a big as possible which is the generic case: all those twists have then rational points according to Theorem \ref{rationalpoints}. We hence study the other cases in order to find counter-examples to the Hasse Principle.

\begin{prop}
    Twists from cases $1$. and $2$. satisfy $C'(\mathbb{Q}_2)=\emptyset$
\end{prop}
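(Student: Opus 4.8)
The plan is to make this an explicit computation using the classification of twists in \cite{Elisa2}. The first step is to extract from that classification, for cases $1.$ and $2.$, concrete defining equations (or at least a concrete parametrized family of equations) for the twists $C'$ over $\Q$; by the structure of those cases the relevant field of definition of the automorphism group is $\Q$ or $\Q(\sqrt{-7})$, so the twisting cocycle is controlled by a small abelian extension and the resulting models have coefficients depending on a handful of rational (or integral) parameters. I would normalize these parameters to be integers, clearing denominators and removing common square (quartic) factors, so that reduction modulo powers of $2$ makes sense.

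The second step is the local analysis at $p=2$. Here I would argue that, after the above normalization, every twist in these families has good or at worst mild reduction behaviour at $2$ that forces $C'(\Q_2)=\emptyset$. Concretely I would reduce the quartic model modulo $2$ (and if necessary modulo $4$ or $8$, using the standard Hensel/Newton-polygon refinement), enumerate the finitely many points of $\P^2(\Z/2^k)$ on the reduced equation for $k$ large enough to be decisive, and check that none of them lifts: i.e. for every candidate residue class the quartic form, evaluated there, has $2$-adic valuation strictly smaller than what a genuine zero would require, or the partial derivatives all vanish so Hensel fails and a direct valuation count rules out a lift. Because $\P^2(\F_2)$ has only seven points, and $\P^2(\Z/8)$ is still small, this is a finite check that can be carried out once for each of the (finitely many shapes of) families $1.$ and $2.$, with the parameters entering only through their residues mod $8$ — and one checks it is empty uniformly in those residues, or case-splits on the finitely many residue patterns.

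The main obstacle I expect is bookkeeping: families $1.$ and $2.$ may each split into several sub-cases according to the reduction type of the twisting characters at $2$ (ramified vs. unramified, the value of the associated quadratic character mod $8$), and for each sub-case one must pick the right precision $k$ so that the mod-$2^k$ obstruction is visible; too coarse a reduction will show spurious smooth points, too fine is wasteful. A secondary subtlety is that $2$ divides the discriminant of these twists (otherwise Proposition \ref{local-points} and its corollary would already give a point), so I cannot invoke smoothness of the reduction — I genuinely need the Newton-polygon/valuation argument rather than a naive point count on a smooth curve. Once the precision is pinned down, the verification itself is mechanical and I would present it as a short table of residue classes mod $8$ together with the obstructing valuation in each, concluding $C'(\Q_2)=\emptyset$ for all twists in cases $1.$ and $2.$
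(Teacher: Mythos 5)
Your proposal is essentially the paper's argument: take the explicit models of cases $1.$ and $2.$ from the classification in \cite{Elisa2} and rule out $\Q_2$-points by a finite check modulo small powers of $2$. The paper needs less precision than you anticipate — case $1.$ is the explicit curve $C_0$, which already has no points over $\F_2$, and case $2.$ (after splitting on the parity of the parameter $m$) has points mod $2$ but none mod $4$, so no residue analysis mod $8$ is required.
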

\begin{proof}
        Case $1$. corresponds to the curve $C_0$. It is straightforward to check that $\bar{C_0}(\F_2)=\emptyset$. For case $2$. we distinguish the cases $m$ even or odd, where $m$ is the parameter of the family given in \cite[Thm. 6.1]{Elisa2}. This time $\bar{{C}'}(\F_2)\neq\emptyset$, but  $\bar{{C}'}(\Z/4\Z)=\emptyset$.
\end{proof}

\begin{prop}
    Twists in case $7$. satisfy $C'(\mathbb{Q}_7)=\emptyset$.
\end{prop}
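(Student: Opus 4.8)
The plan is to mimic the strategy used for cases $1$ and $2$: exhibit an explicit model of a twist in case $7$ over $\Q_7$, reduce modulo a suitable power of $7$, and show there is no point. First I would look up in \cite[Thm. 6.1]{Elisa2} the explicit family of equations parametrizing case $7$, together with its parameter(s); by the classification these twists are defined over fields whose Galois group forces (via Proposition \ref{cusppoints}, once one checks $L\cap\Q(\zeta_7)=\Q(\sqrt{-7})$) that they carry no rational cusps, so the only way they could acquire a $\Q$-point is through Theorem \ref{rationalpoints}, i.e. from an elliptic curve realizing the associated representation $\rho$. The key observation is that for case $7$ the image of $\rho$ has order making $\rho$ incompatible with any $\bar\rho_{E,7}$ permitted by Theorem \ref{thm:possibilities}; but for the purpose of \emph{this} proposition I only need the local statement at $7$, so I set that aside and concentrate on $\Q_7$.

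The heart of the argument is a reduction computation. Since $7$ divides the discriminant of every twist in this family (that is exactly why $7$ is the relevant prime), the naive reduction mod $7$ of the given plane quartic is singular, and a smooth-curve/Hensel argument as in the Corollary after Proposition \ref{local-points} does not apply directly. Instead I would first clear denominators and normalize the coefficients so that the equation has $7$-integral coefficients not all divisible by $7$, then study the reduction $\bar C'$ over $\F_7$: if $\bar C'(\F_7)=\emptyset$ I am essentially done after checking that any $\Q_7$-point would reduce to an $\F_7$-point of the integral model (using that the quartic is a plane curve, so a $\Q_7$-point can be scaled to a primitive integral triple and reduced). If $\bar C'(\F_7)\neq\emptyset$, then, exactly as in the case-$2$ argument, I would pass to $\Z/49\Z$ (or $\Z/343\Z$ if needed), lift each $\F_7$-point and check that no lift satisfies the equation mod the higher power of $7$; this is a finite check once the family is written down, and I would organize it by the residue class of the parameter modulo $7$, just as case $2$ was split according to the parity of $m$.

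The main obstacle I anticipate is \textbf{bookkeeping rather than conceptual}: the model in \cite{Elisa2} for case $7$ may involve a parameter varying over infinitely many $7$-adic classes, so I must verify that the obstruction at $7$ is uniform — i.e. that for \emph{every} admissible value of the parameter the reduction (to the appropriate $7$-power) has no point. To make this manageable I would exploit that the twisting cocycle for case $7$ factors through a group whose order is prime to $7$ or whose ramification at $7$ is controlled, so that after a change of coordinates the $7$-adic behaviour depends only on the parameter modulo a small power of $7$; then finitely many residue classes remain. A secondary subtlety is choosing the right integral model: a bad choice can make $\bar C'$ a union of lines with $\F_7$-points that do lift, masking the obstruction, so I would, if necessary, perform a $7$-adic coordinate change (a substitution $x\mapsto 7x$ or similar on one variable) to expose a genuinely pointless reduction before concluding $C'(\Q_7)=\emptyset$.
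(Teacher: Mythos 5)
Your general plan — reduce modulo $7$, observe the reduction is singular, and then show that the $\F_7$-points cannot be lifted to $\Z/49\Z$ — is the same strategy the paper uses, so the approach is not wrong. The gap is precisely at the step you flag as your ``main obstacle'': you do not actually explain how to make the obstruction uniform over the infinitely many $7$-adic classes of the parameter, only speculating that the ramification of the twisting cocycle or some ad hoc coordinate change will reduce it to finitely many residue classes. The paper closes this with a concrete structural observation that your sketch never reaches: after simplification, \emph{every} twist in case $7$ has an equation of the form $Q^2 + 7G = 0$, with $Q$ and $G$ integral forms of degree $2$ and $4$ respectively, neither divisible by $7$, independently of the $7$-adic valuations of the parameters. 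This is the hyperelliptic-reduction behaviour of the Klein quartic at $7$ (\cite[Prop. 1.2]{Elisa-Ritzenthaler}). Modulo $7$ the curve degenerates to the double conic $Q^2=0$, so any $\F_7$-point lies on the conic $Q=0$ and is a double point of the quartic; one then checks directly that none of these double points lifts even to $\Z/49\Z$, which is exactly the lifting failure you anticipated but did not pin down. Without identifying the $Q^2+7G$ normal form, the finite check you envision has no clear starting point and no guarantee of uniformity, so as written the proposal is a plan rather than a proof.
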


\begin{proof}
    Independently of the valuations at $7$ of the parameters defining this family we always obtain, after simplification, an equation of the form $Q^2+7G=0$ with $Q,G\in\mathbb{Z}[x,y,z]$ homogeneous of degree 2 and 4 respectively, and not divisible by $7$. This model of the twists clearly shows the hyperelliptic reduction behavior of the Klein quartic at $7$, see \cite[Prop. 1.2]{Elisa-Ritzenthaler}. While trying to construct $\mathbb{Q}_7$-points we find an obstruction. We do find points over $\mathbb{F}_7$ in the conic, but they are double points of the quartic and they cannot even be lifted to $\mathbb{Z}/7^2\mathbb{Z}$.
\end{proof}

\begin{prop}
    For infinitely many twists in cases $5$., $9$. and $11$. we have $C'(\mathbb{Q})\neq\emptyset$.
\end{prop}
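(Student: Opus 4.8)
The plan is to exhibit, in each of the cases 5, 9 and 11, an explicit infinite family of elliptic curves $E/\Q$ whose mod-$7$ representation $\bar\rho_{E,7}$ realizes the cocycle attached to the twist, and then invoke Theorem \ref{rationalpoints} to conclude that the corresponding twist $X_E(7)$ or $X_E^-(7)$ has a non-cuspidal rational point. First I would go back to the classification in \cite{Elisa2} and identify, for each of the three cases, which conjugacy class of subgroup of $\operatorname{PGL}_2(\F_7)$ the image of $\rho$ lands in: by Theorem \ref{thm:possibilities} the admissible non-CM images have order $14$, $42$, $336$ or are $\cong S_3$, and one checks that cases 5, 9 and 11 correspond precisely to (some of) these. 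The point is that for each such image group there is at least one elliptic curve over $\Q$ giving it — for the generic case $336$ this is clear, for $S_3$ the curve with $j=3^3\cdot 5\cdot 7^5/2^7$ does the job, and for orders $14$ and $42$ one produces explicit curves (e.g.\ from the LMFDB or from Zywina's models of the relevant modular curves $X_H(7)$).

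Next, to get \emph{infinitely many} twists rather than just one per case, I would use quadratic twisting. If $E/\Q$ realizes $\rho$, then for a squarefree $d$ the twist $E^{(d)}$ has $\bar\rho_{E^{(d)},7} = \bar\rho_{E,7}\otimes\chi_d$, where $\chi_d$ is the quadratic character of $\Q(\sqrt d)$; as $d$ varies over the infinitely many squarefrees, the twisted representations (and hence the twisted curves $X_{E^{(d)}}(7)$) fall into infinitely many distinct $\overline\Q$-isomorphism classes of twists of $\calC$, while each still lies in case 5, 9 or 11 respectively — because tensoring by a quadratic character does not change the isomorphism type of the image group inside $\operatorname{PGL}_2$, only the field of definition of the twist, which is exactly the datum recorded by the case number in \cite{Elisa2}. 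For every one of these curves Theorem \ref{rationalpoints} guarantees a rational point on $X_{E^{(d)}}(7)$ (or $X_{E^{(d)}}^-(7)$), since $E^{(d)}$ itself is a $\Q$-elliptic curve realizing the corresponding $\rho$ and, for $d$ avoiding the finitely many bad values, has $j\neq 0,1728$.

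The main obstacle I expect is \emph{bookkeeping rather than depth}: one must check carefully that the quadratic twists $E^{(d)}$ genuinely sprinkle across infinitely many \emph{distinct} twists of the Klein quartic and do not collapse — i.e.\ that the map $E\mapsto X_E(7)$ composed with quadratic twisting has infinite image — and that the family assignment (case 5 vs.\ 9 vs.\ 11) is stable under this operation. This requires unwinding the cocycle description at the start of Section 4: the cocycle $\xi$ attached to $\bar\rho_{E^{(d)},7}$ differs from that of $\bar\rho_{E,7}$ by the coboundary-like perturbation coming from $\chi_d$, and one has to see that this lands in the same $\operatorname{H}^1$-family but at genuinely new classes for infinitely many $d$, e.g.\ by tracking the ramified primes of the field of definition of the twist. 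A secondary, softer point is the word ``conjecturally'' in Theorem \ref{thm:main}: for cases where only finitely many $E/\Q$ with the prescribed image are currently known unconditionally (the order-$14$ and order-$42$ situations, tied to rational points on certain genus-$0$ or higher modular curves), producing the infinite quadratic-twist family is unconditional, but if one instead wanted infinitely many \emph{geometrically distinct} source curves one would need a conjectural input; I would flag this explicitly and rely only on the quadratic-twist construction for the unconditional part of the statement.
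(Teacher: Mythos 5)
Your central idea for producing \emph{infinitely many} twists --- quadratic twisting of a single elliptic curve $E$ --- does not work, and this is a genuine gap rather than the bookkeeping issue you flag. In this paper $\bar\rho_{E,7}$ denotes the \emph{projective} mod-$7$ representation, valued in $\PGL_2(\F_7)$. Quadratic twisting by $d$ multiplies the $\GL_2(\F_7)$-valued representation $\rho_{E,7}$ by the scalar-valued character $\chi_d$, so after passing to $\PGL_2(\F_7)$ one gets $\bar\rho_{E^{(d)},7}=\bar\rho_{E,7}$ identically --- not merely ``the same image group,'' but literally the same representation, hence the same cocycle $\xi$ and the same twist $X_\rho(7)$. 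So the family $\{E^{(d)}\}_d$ collapses to a single twist of $\mathcal C$; what you produce is the (vacuous) statement that many elliptic curves furnish a rational point on \emph{that one} twist, which is exactly what Theorem \ref{rationalpoints} already says. Your subsidiary claim that only ``the field of definition of the twist'' changes is also false for the same reason: that field is determined by $\bar\rho_{E,7}$, which is unchanged.

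The paper's actual proof is shorter and avoids this pitfall entirely: it cites Zywina's Theorem 1.5 directly, which exhibits (via explicit modular parametrizations by genus-zero curves) infinitely many non-CM $j$-invariants whose mod-$7$ projective image has order $14$, $42$ or $336$. Since each twist $X_\rho(7)$ is a smooth plane quartic of genus $3$, it has only finitely many rational points by Faltings, so only finitely many $j$-invariants can realize a given $\rho$; infinitely many $j$-invariants therefore force infinitely many distinct projective representations $\rho$ and hence infinitely many distinct twists of $\mathcal C$, each with a rational point by Theorem \ref{rationalpoints}. Your parenthetical mention of ``Zywina's models of the relevant modular curves $X_H(7)$'' was the right thread to pull; the quadratic-twisting superstructure should be discarded.
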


\begin{proof}
      They are infinitely many non-CM elliptic curves giving Galois representations with  $\bar{\rho}_{E,7}$ isomorphic to the corresponding groups of order $14$, $42$ and $336$ respectively, see \cite[Thm. 1.5]{Zywina2015}.
\end{proof}

We look next to families that do provide counter-examples to the Hasse Principle.

\subsection{Cases 3., 4. and 8}\label{sec:good}



For those twists the field of definition is $L=K(\alpha,\beta,\gamma)$ with $K=\Q(\sqrt{-7})$ and $\alpha,\beta,\gamma$ are the roots of a degree $3$ irreducible polynomial $f=x^3+Ax^2+Bx+C$ with coefficients in $\Q$. For case 3 we have that the splitting field of $f$ over $\Q$ has discriminant $\Delta = -7q^2$, for case 4. $\Delta = q^2$, and for case 8 we have that the splitting field of $f$ has discriminant $\Delta$ not of the form $q^2$ or $-7q^2$. 

Let $\phi\colon C'\rightarrow C_0$ be the twist associated with $L$. Then $\phi$ is given by 
$$\left(\begin{array}{ccc}
 d & -3\alpha + 2\beta+\gamma & \alpha\beta -3\beta\gamma+2\alpha\gamma \\
 d & \alpha -3\beta+2\gamma   & 2\alpha\beta +\beta\gamma-3\alpha\gamma \\
 d & 2\alpha +\beta-3\gamma   & -3\alpha\beta +2\beta\gamma+\alpha\gamma \\
\end{array}\right)$$
which has determinant equal to $-21d\sqrt\Delta$. Taking $(d,\Delta)=(\sqrt{-7},-7q^2)$ gives us case 3, $(d,\Delta)=(1,q^2)$ gives us case 4, and taking $(d,\Delta)=(\sqrt{\Delta},\Delta)$ yields case 8. 

Therefore if $p$ is a prime such that $p\neq 2,3,7$ and $p\nmid \Delta$, then $p\nmid \det(M)$ hence $C'$ has good reduction at $p$. 
From Proposition \ref{local-points}, and its corollaries, it follows that for such primes $C'(\Q_p)\neq\emptyset$. 

We can assume the coefficient $A=0$. Hence, accordingly to \cite{Elisa2} the twist $C'$ is given by the equation 
\begin{align*}
F= & \,\,  3d^4x^4-63 d^2B x^2y^2+ 9\cdot 21 d^2C x^2yz +21 d^2 B^2 x^2z^2 +147 d\sqrt\Delta xy^3\\
&+147 d\sqrt\Delta Bxyz^2 - 147d\sqrt\Delta Cxz^3-9\cdot 49B^2 y^4 +98\cdot 27 BC y^3z\\
&+147(2B^3 -27C^2)y^2z^2 -9\cdot 98 B^2C yz^3- 49B^4 z^4 =0.
\end{align*}
With $\Delta = -4 B^3 -27 C^2$.


We study now the $\Q_2$-points, $\Q_3$-points and $\Q_7$-points.

\begin{lemma}
Let $f(x)=x^3+Bx+C$ and $C'$ as above, if $2\nmid C$ then $C'(\Q_2)\neq\emptyset$.
\end{lemma}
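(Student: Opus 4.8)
The strategy is to exhibit an explicit point of $C'(\Q_2)$ by reducing mod $2$ and applying Hensel's Lemma, exactly as in the corollary to Proposition \ref{local-points}. First I would reduce the quartic $F$ modulo $2$. Since $d^2, d^4, d\sqrt\Delta$ and the symmetric functions $B, C, \Delta = -4B^3-27C^2$ all lie in $\Z[\sqrt{-7}]$ after clearing denominators, I can work over the residue field, which is $\F_2$ (the prime $2$ is inert or split in $\Q(\sqrt{-7})$; either way the relevant completion has residue field $\F_2$, since $-7\equiv 1 \pmod 8$ so $2$ splits and the residue field is $\F_2$). Reducing, most coefficients involving the factor $d\sqrt\Delta$ or even multiples of $B$, $C$ vanish mod $2$: the surviving terms come down to $3d^4x^4 + 9\cdot 21\,d^2C\,x^2yz + \ldots$, and using $3\equiv 1$, $21\equiv 1$, $49\equiv 1$, $27\equiv 1 \pmod 2$ together with the hypothesis $2\nmid C$ (so $C\equiv 1$), I would get a concrete quartic $\bar F \in \F_2[x,y,z]$.

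Next I would search for a smooth $\F_2$-point on $\bar F = 0$. The natural candidate to test is a point with $x=0$, since then $\bar F$ collapses to the binary form in $y,z$ coming from the last line of $F$: modulo $2$ this is $y^4 + C^2 y^2 z^2 + C^2 z^4$-type expression (again using $49\equiv 1$, $27\equiv 1$), and with $C\equiv 1$ one checks which of $(0:1:0)$, $(0:0:1)$, $(0:1:1)$ lies on it. Alternatively, test the points $(1:0:0)$, $(1:1:0)$, $(1:0:1)$, $(1:1:1)$ directly in $\bar F$. Since $\F_2$ has only seven projective points in $\P^2$, this is a finite check. Having found a point $P_0$ on $\bar C' = \{\bar F = 0\}$, I would verify it is nonsingular by checking that not all three partial derivatives $\partial \bar F/\partial x$, $\partial \bar F/\partial y$, $\partial \bar F/\partial z$ vanish at $P_0$; this is automatic, since $\bar C'$ is a twist of the Klein quartic over $\F_2$ and hence a smooth plane quartic, so every $\F_2$-point is smooth. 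Then Hensel's Lemma lifts $P_0$ to a $\Q_2$-point (or, more precisely, a $\Z_2[\sqrt{-7}]$-point which, because the twist and the model $F$ are defined over $\Q$, descends to $\Q_2$).

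The one genuine subtlety — and the main obstacle — is making sure the reduction is taken with respect to a good integral model and that the hypothesis $2\nmid C$ is exactly what is needed for $\bar C'$ to be this particular smooth quartic with a rational point. Concretely: after absorbing $d$ and $\sqrt\Delta$ into the equation, one must check that $F$ (suitably scaled) has $2$-integral coefficients with content prime to $2$, so that $\bar F$ is not identically zero and defines the reduction of $C'$; here the assumption $2\nmid C$ guarantees the coefficient $9\cdot 21\, d^2 C$ of $x^2 yz$ (equivalently $49 B^2$-type terms, after tracking parities of $B$ via $\Delta = -4B^3 - 27 C^2$ and $2\nmid C \Rightarrow 2\nmid\Delta$) is a unit, pinning down $\bar F$ unambiguously. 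Once the correct $\bar F$ is in hand the rest is the routine finite point-count plus Hensel, so I expect the writeup to consist mainly of displaying $\bar F \bmod 2$, naming one smooth $\F_2$-point, and invoking Hensel's Lemma as in the earlier corollary.
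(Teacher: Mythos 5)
Your plan is essentially the paper's proof: reduce the equation $F$ modulo $2$ using $2\nmid C$ (and hence $2\nmid\Delta$), split into the two cases according to the parity of $B$, exhibit a smooth $\F_2$-point, and lift by Hensel's Lemma. The paper gives the explicit reductions and the explicit points $(0{:}1{:}0)$ when $2\mid B$ and $(1{:}1{:}1)$ when $2\nmid B$, which is exactly the finite check you defer; your added remark about residue fields is unnecessary since $d^2$, $d^4$, and $d\sqrt\Delta$ are rational in all three cases, so $F\in\Q[x,y,z]$ from the start.
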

\begin{proof}
If $2\nmid C$ then $2\nmid \Delta$ and $F$ reduces modulo $2$ as follows
\begin{eqnarray*}
x^4 +x^2yz+xy^3 +xz^3 +y^2z^2 &\text{if }2\mid B\\
x^4 +x^2y^2+x^2yz +x^2z^2+xy^3 +xyz^2+xz^3 +y^4 +y^2z^2+z^4 &\text{if }2\nmid B\\
\end{eqnarray*}
Note that if $2\mid B$, then the point $(0:1:0)$ is a solution over $\F_2$, If $2\nmid B$, then $(1:1:1)$ is a solution over $\F_2$. Both of which can be lifted with Hensel's Lemma since the derivative with respect to $x$ does not vanish.
\end{proof}

\begin{lemma}
Let $f(x) =x^3+Bx+C$ and $C'$ as above. If  $3\nmid B$
then $C'(\Q_3)\neq\emptyset$.
\end{lemma}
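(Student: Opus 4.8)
The plan is to mimic the $p=2$ argument: reduce the quartic $F$ modulo $3$ under the hypothesis $3\nmid B$, produce a smooth $\F_3$-point, and lift it by Hensel's Lemma. First I would note that the condition $3\nmid B$ already controls the relevant coefficients: in $F$ the monomials carrying a factor that could vanish mod $3$ are exactly those with coefficients $63 d^2 B$, $9\cdot 21\, d^2 C$, $147 d\sqrt\Delta$, $98\cdot 27\, BC$, $147(2B^3-27C^2)$, $9\cdot 98\, B^2 C$, $49 B^4$, etc. Since $9$, $21=3\cdot7$, $147=3\cdot 49$, $27$, $98\cdot 27$ and $9\cdot 98$ are all divisible by $3$, every term except $3d^4 x^4$ and $-49 B^4 z^4$ (and possibly $-63 d^2 B x^2 y^2 = -9\cdot 7\, d^2 B x^2y^2$, also divisible by $3$) vanishes modulo $3$ — wait, $3 d^4 x^4$ is itself divisible by $3$. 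So in fact modulo $3$ the equation $F\equiv 0$ becomes $-49 B^4 z^4 \equiv 0$, i.e. $2 B^4 z^4 \equiv 0 \pmod 3$, which forces $z\equiv 0$ since $3\nmid B$.

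Consequently the $\F_3$-points of $\bar{C'}$ all lie on the line $z=0$, and there one must look at the equation to higher precision. The honest approach is therefore to work modulo $9$ (or to exhibit a $\Z/9\Z$-point and invoke the strong form of Hensel's Lemma): set $z=3w$ and divide $F$ by $3$, obtaining a new integral quartic form $\tilde F(x,y,w)$ whose reduction modulo $3$ I would compute explicitly. After the substitution, the leading part in $x$ is $d^4 x^4$ (from $3 d^4 x^4 / 3$), the term $147 d\sqrt\Delta xy^3 / 3 = 49 d\sqrt\Delta x y^3$ survives, and $49 B^4 z^4 = 49 B^4 \cdot 81 w^4$ becomes divisible by $3$ again, etc. I expect $\tilde F \bmod 3$ to be a nondegenerate-enough cubic-in-$y$ or quartic-in-$x$ that, for the two sub-cases $3\mid C$ and $3\nmid C$, visibly has a smooth $\F_3$-point — for instance a point with $w=0$ of the shape $(x_0:1:0)$ where the partial derivative with respect to $x$ is a unit. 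As in the preceding lemma, one then lifts that point by Hensel's Lemma. (One must also recall that $d$ is a unit at $3$: for case 4 $d=1$, and for cases 3 and 8 $d=\sqrt{-7}$ or $\sqrt\Delta$ with $3\nmid\Delta$, so $d^4$ and $\sqrt\Delta$ are $3$-adic units under our running hypotheses, except that in case 8 one should separately check $3\nmid\Delta$, which follows from $3\nmid B$ together with the shape $\Delta=-4B^3-27C^2\equiv -4B^3\not\equiv 0\pmod 3$.)

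The main obstacle is precisely that, unlike the $p=2$ case, the naive reduction mod $3$ is degenerate — the whole quartic collapses to a single monomial — so the smooth point has to be found one level deeper, on the exceptional line $z=0$ after rescaling. The bookkeeping of which rescaled coefficients are $3$-adic units (and carefully tracking the two parities/residues of $C$, since the behaviour of $\tilde F \bmod 3$ differs according to $C\bmod 3$, just as $F\bmod 2$ split according to $B\bmod 2$ in the previous lemma) is where the real work lies; once a unit partial derivative is exhibited at an $\F_3$-zero of the rescaled form, Hensel's Lemma finishes the argument and gives $C'(\Q_3)\neq\emptyset$.
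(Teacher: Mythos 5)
Your approach is exactly the paper's: observe that modulo $3$ every term of $F$ but $-49B^4z^4$ vanishes (so $z\equiv 0$), substitute $z\mapsto 3z$, divide by $3$, and exhibit a smooth $\F_3$-point of the rescaled form to lift via Hensel's Lemma, using $3\nmid\Delta$ (hence $3\nmid d\sqrt\Delta$ and $3\nmid d$) as you note. The one thing you leave as ``I expect\dots'' in fact comes out cleaner than you anticipate: after the rescaling the reduction modulo $3$ is simply $d^4x^4+49\,d\sqrt\Delta\,xy^3$, so $C$ drops out entirely and no case split on $C\bmod 3$ is needed; the smooth point is $(0:1:0)$, where $\partial/\partial x\equiv 49\,d\sqrt\Delta$ is a $3$-adic unit, exactly as in the paper.
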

\begin{proof}
If $3\nmid B$, then since $\Delta=-4B^3-27C^2$ it follows that $3\nmid \Delta$ and therefore $3\nmid d\sqrt{\Delta}$ and $3\nmid d$. After making the change of variable $z\mapsto 3z$ and dividing by $3$ the equation, we obtain modulo $3$ the non-singular point $(0:1:0)$, that lifts to a point in $\Q_3$ by Hensel's Lemma. 
\end{proof}

\begin{prop}
Let $f(x)=x^3+Bx+C$ and $C'$ as above. Assume $7\mid B$ and $7\mid\mid C$. Then $C'(\Q_7)\neq \emptyset$.
\end{prop}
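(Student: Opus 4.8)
The plan is to work over $\Q_7$, replace $C'$ there by an explicit $\Z_7$-model whose special fibre carries a visible smooth point, and finish with Hensel's Lemma. First I record the $7$-adic data. Write $B=7B_1$ and $C=7C_1$; since $7\mid\mid C$ we have $C_1\in\Z_7^\times$. As $v_7(4B^3)\geq 3$ while $v_7(27C^2)=2$, the discriminant satisfies $v_7(\Delta)=2$ and $\Delta/49\equiv -27C_1^2\equiv C_1^2\pmod 7$, a nonzero square; hence $\Delta$ is a square in $\Q_7$, and we may write $\sqrt\Delta=7s$ with $s\in\Z_7^\times$. Since $d^2\in\{1,\Delta\}$, it follows that $d\in\Q_7^\times$ (in case $3$ one would need $d^2=-7$ to be a square in $\Q_7$, which is false, so that case cannot occur under our hypotheses).

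Next I pass to a $d$-free model over $\Q_7$. The $\Q_7$-linear change of variables $(x:y:z)\mapsto(x:dy:dz)$ carries $\{F=0\}$ isomorphically onto $\{G=0\}$, where $G=d^{-4}F(x,dy,dz)$; all powers of $d$ cancel (in $F$ the coefficient of each monomial of $x$-degree $a$ is divisible by $d^a$, while the substitution multiplies that monomial by $d^{4-a}$), leaving
\begin{align*}
G=\,&3x^4-63Bx^2y^2+189Cx^2yz+21B^2x^2z^2+147\sqrt\Delta\,xy^3\\
&+147\sqrt\Delta\,Bxyz^2-147\sqrt\Delta\,Cxz^3-441B^2y^4+2646BCy^3z\\
&+147(2B^3-27C^2)y^2z^2-882B^2Cyz^3-49B^4z^4,
\end{align*}
an equation over $\Q_7$ (its only a priori irrational coefficient is $\sqrt\Delta\in\Q_7$). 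It reduces modulo $7$ to $3x^4$, so the naïve special fibre is a quadruple line; I therefore rescale $x\mapsto 7x$. A valuation count using $B=7B_1$, $C=7C_1$, $\sqrt\Delta=7s$ shows that $7^4$ divides every coefficient of $G(7x,y,z)$ but not the coefficient of $x^4$, so $\tilde{G}:=7^{-4}G(7x,y,z)\in\Z_7[x,y,z]$ defines a $\Z_7$-model of $C'_{\Q_7}$ whose special fibre is cut out, over $\F_7$, by
$$\bar{G}_1:=3x^4+5B_1x^2y^2+6C_1x^2yz+3s\,xy^3+5B_1^2y^4+5B_1C_1y^3z+3C_1^2y^2z^2.$$

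The crux is the identity $\bar{G}_1=3\bigl[(x^2+2B_1y^2+C_1yz)^2+s\,xy^3\bigr]$ in $\F_7[x,y,z]$, which is exactly the ``hyperelliptic at $7$'' shape ($\mathrm{conic}^2+\mathrm{monomial}$) anticipated by \cite[Prop. 1.2]{Elisa-Ritzenthaler}. Writing $Q:=x^2+2B_1y^2+C_1yz$, the line $x=0$ meets $\{Q=0\}$ in $(0:0:1)$ and in $P:=(0:C_1:-2B_1)$, and $P$ is a genuine point of $\P^2(\F_7)$ precisely because $C_1\in\F_7^\times$, i.e. because $7\mid\mid C$. One has $Q(P)=2B_1C_1^2-2B_1C_1^2=0$, hence $\bar{G}_1(P)=0$, while $\partial_x\bar{G}_1=3(4Qx+sy^3)$ takes the value $3sC_1^3\neq 0$ at $P$ (here $s\in\Z_7^\times$). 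Thus $P$ is a smooth point of the special fibre of $\{\tilde{G}=0\}$, and Hensel's Lemma lifts it to a $\Z_7$-point; therefore $C'(\Q_7)\neq\emptyset$.

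The only genuinely computational step is the middle one — checking the divisibility by $7^4$, extracting $\bar{G}_1$, and verifying the perfect-square identity; everything else is immediate. The hypotheses enter exactly where one expects: $7\mid B$ forces $v_7(\Delta)=2$ and hence $s\in\Z_7^\times$, $7\mid\mid C$ gives $C_1\in\Z_7^\times$, and these two unit conditions are precisely what keep $P$ away from the singular point $(0:0:1)$ of the reduction.
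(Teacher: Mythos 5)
Your proof is correct, and it verifies by hand what the paper delegates to Magma; the overall route is the same (produce an integral $\Z_7$-model, locate a nonsingular $\F_7$-point on the special fibre, lift by Hensel), but your version is considerably more informative. I checked the computation: with $B=7B_1$, $C=7C_1$, $\sqrt\Delta=7s$, the coefficients of $7^{-4}G(7x,y,z)$ reduce mod $7$ exactly to $3x^4+5B_1x^2y^2+6C_1x^2yz+3sxy^3+5B_1^2y^4+5B_1C_1y^3z+3C_1^2y^2z^2$, and expanding $3(x^2+2B_1y^2+C_1yz)^2+3sxy^3$ mod $7$ (using $12\equiv 5$) recovers this. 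The point $P=(0:C_1:-2B_1)$ lies on the special fibre with $\partial_x$-value $3sC_1^3\neq 0$, so the lift is valid. Two side remarks that add to the correctness: the hypotheses $7\mid B$, $7\mid\mid C$ force $v_7(\Delta)=2$, which already shows case 3 ($\Delta=-7q^2$, odd $7$-valuation) cannot occur, independently of the $\sqrt{-7}\notin\Q_7$ argument; and the unused coefficients ($x^2z^2$, $xyz^2$, $xz^3$, $yz^3$, $z^4$) all acquire $7$-valuation at least $5$, so they do vanish from $\bar G_1$, as your list implies. What your argument buys over the paper's bare computer citation is exactly what one hopes for: it makes the hyperelliptic-reduction shape $\mathrm{conic}^2+\mathrm{monomial}$ visible and shows precisely how $s\in\Z_7^\times$ (from $7\mid B$) and $C_1\in\Z_7^\times$ (from $7\mid\mid C$) conspire to keep the smooth point $P$ away from the singular point $(0:0:1)$, which also explains why the paper's Magma check fails only when $c=0$.
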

\begin{proof}
We check with Magma \cite{magma} that after writing $B=7b$ and $C=7c$ we get non-singular points over $\mathbb{F}_7$ for all the values of $b,c\in\mathbb{F}_7$ unless $c=0$.

\end{proof}

 For primes $p$ dividing the discriminant we prove the existence of $\Q_p$-points under the following conditions.
 
\begin{prop}
Let $p\neq 2,3,7$ be a prime number. If $p\mid B$ and $p\mid\mid C$ then $C'(\Q_p)\neq \emptyset$.
\end{prop}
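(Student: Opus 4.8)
The plan is to mimic the proof of the $p=7$ case (the previous proposition) as closely as possible, exploiting the hypotheses $p \mid B$ and $p \mid\mid C$ to rescale the equation $F=0$ into something with good reduction away from a controlled locus, and then to produce a smooth $\F_p$-point that lifts by Hensel. First I would write $B = pb$ and $C = pc$ with $p \nmid c$ (this is exactly what $p \mid\mid C$ gives), and substitute into the displayed equation for $F$. As in the $p=7$ argument one wants to make a weighted change of variables — presumably $z \mapsto pz$ (or $y,z$ scaled by suitable powers of $p$) — so that after dividing through by the common power of $p$ one obtains an integral model $\tilde F \in \Z_p[x,y,z]$ whose reduction mod $p$ is not identically zero. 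The point of the hypothesis $p \mid B$, $p \mid\mid C$ is precisely that after this rescaling the troublesome terms (those with the highest $p$-valuation, e.g. the $B^4 z^4$, $B^2 C yz^3$, $(2B^3 - 27C^2)y^2z^2$ terms) either drop out mod $p$ or contribute a nonzero monomial, leaving a reduced curve that is nice enough to carry a smooth rational point.

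Next I would identify an explicit smooth $\F_p$-point on the reduced model. Looking at $F$, the monomials surviving with smallest $p$-valuation after the substitution $B=pb$, $C=pc$ and the scaling are governed by the $3d^4 x^4$ term (valuation $0$ in the part not involving $\Delta$, though $\sqrt{\Delta}$ needs care since $\Delta = -4B^3 - 27C^2 = -p^3(4pb^3) - 27 p^2 c^2$, so $p^2 \mid\mid \Delta$ and $p \mid\mid \sqrt{\Delta}$ after the scaling makes $d\sqrt{\Delta}$-terms have a predictable valuation). The natural candidate, as in the $p=3$ and $p=7$ lemmas, is the point $(0:1:0)$ or $(1:1:1)$ on the rescaled reduced curve; I would check that the relevant partial derivative (with respect to $x$, say, since the $x^4$ coefficient is a $p$-adic unit) is nonzero at that point, so Hensel's Lemma applies and lifts it to $C'(\Q_p)$. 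One subtlety is that $\sqrt{\Delta}$ and $d$ are only defined over $\Q(\sqrt{-7})$ or over the splitting field, but since $p \neq 7$ and (in case 4) $d=1$, while in cases 3 and 8 one reduces to working over the unramified/tamely ramified completion and checks the point is $\Q_p$-rational; this mirrors the handling in the $p=7$ proposition and in the earlier lemmas.

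The main obstacle I anticipate is bookkeeping the $p$-valuations correctly after the weighted rescaling: one must verify that the scaling of $y$ and $z$ can be chosen uniformly (independently of $b,c \bmod p$, subject only to $p \nmid c$) so that the reduced model is always nonzero and always has a smooth point — the $p=7$ proof needed the extra exclusion $c \neq 0$, which is automatically satisfied here since $p \mid\mid C$ forces $p \nmid c$, so one expects no exceptional subcases, but this needs to be confirmed. A secondary point of care is that $\Delta = -4B^3 - 27C^2$ with $p \mid B$, $p \mid\mid C$ forces $p^2 \mid\mid \Delta$, so $\sqrt{\Delta}$ acquires valuation $1$, and hence the terms $147 d\sqrt{\Delta}\, xy^3$, etc., behave like the $3d^4 x^4$ term up to a unit after scaling — one should check these do not conspire to make the reduced quartic singular along the candidate point. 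I would carry this out by the same Magma verification over $\F_p$ used for $p=7$, checking that for every residue $b \in \F_p$ and every nonzero residue $c \in \F_p^\times$ the rescaled reduced curve has a smooth $\F_p$-point, and then invoking Hensel's Lemma together with the good-reduction discussion of Section~\ref{sec:good}.
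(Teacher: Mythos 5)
Your overall plan matches the paper's: substitute $B = pb$, $C = pc$ with $p \nmid c$, rescale to get an integral model with nontrivial reduction, find a smooth $\F_p$-point, and lift by Hensel. But the specific choices you make in executing this are not quite right, and they would lead you astray if you tried to carry them out.

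The key miscalibration is the direction of the rescaling. You propose $z \mapsto pz$ (or scaling $y,z$ by powers of $p$), by analogy with the $p=3$ lemma — but that lemma operates under the hypothesis $3 \nmid B$, and here we have the opposite, $p \mid B$. With $p \mid B$ and $p \mid\mid C$, every term of $F$ except $3d^4 x^4$ (and the $x^2$-terms) picks up factors of $p$, so scaling $y,z$ up by $p$ only makes those terms more divisible while leaving $3d^4 x^4$ a unit, and the reduction mod $p$ collapses to $x=0$. The useful rescaling goes the other way: $x \mapsto p\cdot\text{(unit)}\cdot x$ (concretely $x \mapsto d\sqrt{\Delta}\,x$, which has $v_p = 1$ in cases 3 and 4; in case 8 no rescaling is even needed). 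Then every term of $F$ acquires $p$-valuation at least $2$, and dividing by $p^2$ and reducing mod $p$ gives
$$F \equiv y^2\bigl(-3969\,c^2\,xy - 441\,b^2\,y^2 + 2646\,bc\,yz - 3969\,c^2\,z^2\bigr) \pmod p$$
(up to an innocuous unit rescaling of $x$). Here the key input you correctly flagged — that $p^2 \mid\mid \Delta = -4B^3 - 27C^2$, with $\Delta/p^2 \equiv -27c^2 \pmod p$ — is what produces the $-3969\,c^2$ coefficient.

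This changes the rest of the argument. The reduced quartic is $y^2$ times a conic, not a curve with unit $x^4$-coefficient, so the claim that ``the $\partial/\partial x$ derivative is nonzero because the $x^4$ coefficient is a unit'' does not hold. The smooth $\F_p$-points are those on the conic with $y \neq 0$; they exist because the coefficient of $xy$ in the conic is $-3969\,c^2$, which is nonzero mod $p$ precisely when $p \neq 3,7$ and $c \neq 0$ (automatically true since $p \mid\mid C$). In particular $(0:1:0)$ and $(1:1:1)$ are not the right candidates in general; one solves the linear equation in $x$ after setting $y = 1$ and choosing $z$ freely. Finally, since $p$ ranges over infinitely many primes, the verification cannot be a finite per-prime Magma enumeration over $b \in \F_p$, $c \in \F_p^\times$ as for $p=7$; it must be (and in the paper is) the symbolic computation above, which shows the reduction mod $p$ uniformly as a polynomial in $b,c$ with integer coefficients. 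With those corrections your plan lines up with the paper's proof.
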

\begin{proof}
One can check with Magma \cite{magma} that writing $B=pb$ and $C=pc$ we get that $F\equiv y^2(-3969c^2xy - 441b^2y^2 + 2646bcyz - 3969c^2z^2)\text{ mod }p$ which contains non-singular point over $\mathbb{F}_p$ if $c\neq 0$.
\end{proof}

Let us address now the question about rational points. 

\begin{prop} In cases 4. and 8. $C'(\Q)=\emptyset$.\label{Qptcase4}\end{prop}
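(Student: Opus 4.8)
The plan is to use the modular interpretation via Theorem \ref{rationalpoints}. A twist $C'$ of $X(7)$ has a non-cuspidal rational point with $j\neq 0,1728$ only if $C'$ is one of the two twists $X_E(7)$ or $X_E^-(7)$ attached to an elliptic curve $E/\Q$ whose mod-$7$ projective representation $\bar\rho_{E,7}$ realizes the cocycle defining $C'$; so I would first argue that in cases 4 and 8 the twist $C'$ cannot be of this form. The cocycle in these cases is built from the field $L=K(\alpha,\beta,\gamma)$ with $K=\Q(\sqrt{-7})$ and $\alpha,\beta,\gamma$ the roots of an irreducible cubic, and the associated representation $\rho\colon G_\Q\to\operatorname{PGL}_2(\F_7)$ has determinant $\chi_7$ by construction. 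The key point is to read off the image of $\rho$: since $L/\Q$ has degree $6$ with Galois group $S_3$ (case 8) or is related to $\Q(\sqrt{-7})$ in a way that still forces a non-cyclic image, the image of $\rho$ sits inside $\operatorname{PGL}_2(\F_7)$ and must be compared against the list in Theorem \ref{thm:possibilities}. The decisive distinction between cases 4, 8 on one hand and the "good" cases 5, 9, 11 on the other is the discriminant of the splitting field: for a representation coming from an elliptic curve, the quadratic subfield cut out is forced by $\det\bar\rho_{E,7}=\chi_7$ to be $\Q(\sqrt{-7})$ (or trivial), whereas in case 4 the discriminant is $q^2$ (so the relevant quadratic character is trivial on the cubic part but the whole picture is incompatible) and in case 8 the discriminant is neither $q^2$ nor $-7q^2$, so the field $L$ simply cannot be the $7$-division field of any $E/\Q$.

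Concretely, the steps are: (1) Identify the subgroup $H=\rho(G_\Q)\subseteq\operatorname{PGL}_2(\F_7)$ for a twist in case 4 or 8, using that $L$ is its fixed field and that $\operatorname{Gal}(L/\Q)\simeq H/(H\cap\operatorname{PSL}_2(\F_7))$-type data pins down $H$ up to conjugacy. (2) Check that no such $H$ occurs as the image of some $\bar\rho_{E,7}$: for the non-CM curves this is exactly the enumeration of Theorem \ref{thm:possibilities}, where the allowed images have orders $6$ (with $j(E)=3^3\cdot5\cdot7^5/2^7$ forced, a single $j$-line), $14$, $42$, $336$, and one compares both the order and — crucially — the quadratic subfield; for the CM curves one uses the cardinality constraints ($7$, or $>12$) and that $7$-division fields of CM curves contain $\Q(\sqrt{-7})$ exactly in the exceptional $\Q(\sqrt{-7})$-CM case, again incompatible with the discriminant conditions of cases 4 and 8. (3) Conclude that $C'\neq X_E(7),X_E^-(7)$ for every $E/\Q$, so by Theorem \ref{rationalpoints} the twist $C'$ has no non-cuspidal rational point with $j\neq 0,1728$. (4) Dispose of the remaining possible rational points: the finitely many elliptic curves with $j=0$ or $j=1728$ and the cusps. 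For the cusps, apply Proposition \ref{cusppoints}: in cases 4 and 8 the field of definition $L$ satisfies $L\cap\Q(\zeta_7)=\Q(\sqrt{-7})$ (the cubic subextension is linearly disjoint from $\Q(\zeta_7)$ since its discriminant is not a multiple of a square times $-7$ in the right way, and $K=\Q(\sqrt{-7})$), hence $C'$ has no rational cusps. For $j=0,1728$ one checks directly that the corresponding points, if any, do not lie on our twist, or that the finitely many $E$ with those $j$-invariants do not realize $\rho$.

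The main obstacle I expect is step (2), and within it the precise bookkeeping of which quadratic field each admissible image $H$ cuts out: the orders alone ($14$, $42$, $336$) do not immediately exclude a group of the same order appearing in case 4 or 8, so one genuinely needs the refinement that for an elliptic-curve representation the "semisimplification direction" quadratic character is $\chi_7$, pinning the quadratic subfield to $\Q(\sqrt{-7})$, and then the contrast with $\Delta=q^2$ (case 4, trivial quadratic part) or $\Delta\neq q^2,-7q^2$ (case 8) becomes the clean obstruction. A secondary subtlety is handling $X'_\rho(p)$ versus $X_\rho(p)$ and the curves with $j\in\{0,1728\}$, where $\bar\rho_{E,7}$ is only determined up to an extra twist; here one invokes \cite[Lemma 2.1]{JFern} and the explicit small list of such curves to rule out that the finitely many exceptional $E$ give a point on $C'$.
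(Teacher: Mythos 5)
Your plan hits the same three pillars as the paper's proof and in the same order: Theorem~\ref{thm:possibilities} to show the twist is not of the form $X_E(7)$ or $X_E^-(7)$, Theorem~\ref{rationalpoints} to rule out non-cuspidal rational points, and Proposition~\ref{cusppoints} to rule out cusps. So the route is essentially identical, not a different argument.

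One place where you make the argument harder than it needs to be: in step~(2) you worry that ``the orders alone ($14$, $42$, $336$) do not immediately exclude a group of the same order appearing in case 4 or 8,'' and propose a further comparison of the quadratic subfield cut out by $\rho$ against $\chi_7$. But in fact you never computed the orders that actually arise. For case~4 the discriminant $\Delta=q^2$ forces the cubic to have cyclic Galois group $C_3$, so $\operatorname{Gal}(L/\Q)\simeq C_6$ of order $6$; for case~8 the cubic has Galois group $S_3$ with quadratic resolvent distinct from $\Q(\sqrt{-7})$, so $\operatorname{Gal}(L/\Q)\simeq S_3\times C_2$ of order $12$. Neither is in the list of Theorem~\ref{thm:possibilities}: $C_6$ is order~$6$ but abelian hence not $S_3$, and order~$12$ is excluded outright (not $7$, $14$, $42$, $336$, and not $>12$). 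The quadratic-subfield refinement you describe is exactly what matters in case~3 (where the image really is $S_3$ of order~$6$), and that is why the paper treats case~3 in a separate proposition with the extra exclusion of the field $6.0.214375.1$; it is not needed here.

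On the other side, you rightly flag a loose end that the paper's proof glosses over: Theorem~\ref{rationalpoints} is stated only for non-cuspidal points with $j\neq 0,1728$, so a fully airtight proof should also dispose of putative rational points above $j=0$ or $j=1728$ (the corresponding $E$ all have CM by $\Q(\sqrt{-3})$ or $\Q(i)$, so by Theorem~\ref{thm:possibilities} their images have order $>12$, again incompatible with orders $6$ and $12$). That is a genuine, if minor, gap in the paper's one-line invocation of Theorem~\ref{rationalpoints}, and your step~(4) is where it would be closed.
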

\begin{proof}
From Theorem \ref{thm:possibilities} we know that these twists do not come from elliptic curves, i.e. they are not of the form $X_E(7)$. Then  Theorem \ref{rationalpoints} tells us that those twists have no non-cuspidal rational point. Finally, Proposition \ref{cusppoints} gives us the non existence of cuspidal rational points.
\end{proof}

\begin{prop}
In case 3., if $C'\not\simeq_\Q \mathcal{C}$ and the field of definition is not the field $6.0.214375.1$ (with the \cite{lmfdb} notation) then $C'(\Q)=\emptyset$.
\end{prop}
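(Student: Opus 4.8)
The plan is to argue exactly as in Proposition \ref{Qptcase4}, reducing the statement to the non-existence of cuspidal and non-cuspidal rational points, and then to handle the single exceptional field separately. For the non-cuspidal points: by Theorem \ref{thm:possibilities}, a twist in case 3. can only arise from an elliptic curve $E/\Q$ if $\bar\rho_{E,7}$ has image of the appropriate order and shape forcing the field of definition $L$ to be one of a controlled list; here the discriminant condition $\Delta = -7q^2$ (equivalently, $L\cap\Q(\zeta_7)=\Q(\sqrt{-7})$ together with the cubic subfield being cut out by a $-7q^2$-discriminant polynomial) pins down which CM or non-CM curves are possible. The point is that the \emph{only} non-CM elliptic curve whose mod-$7$ image lands in a group of this type is, up to quadratic twist, the one with $j(E)=3^3\cdot5\cdot7^5/2^7$ (image $S_3$), and this one gives the twist $\calC$ itself, while the CM case with $7\mid\operatorname{disc}$ forces the CM-field $\Q(\sqrt{-7})$ and the associated field of definition is precisely $6.0.214375.1$. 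So once $C'\not\simeq_\Q\calC$ and $L\neq 6.0.214375.1$, Theorem \ref{rationalpoints} yields that $C'$ has no non-cuspidal rational point.

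First I would make the modular bookkeeping precise: translate the condition ``case 3.'' into a statement about the representation $\rho:\,G_\Q\to\PGL_2(\F_7)$ attached to the cocycle, namely that its image has the structure recorded in \cite{Elisa2} and that its restriction to the cubic subfield has splitting field of discriminant $-7q^2$. Then cross-reference with \cite[Thm.~1.5]{Zywina2015} (our Theorem \ref{thm:possibilities}) to enumerate which images of $\bar\rho_{E,7}$ for $E/\Q$ actually produce such a $\rho$, separating the non-CM cases (images of order $14,42,336$ or $\cong S_3$) from the CM cases (the finitely many $j$-invariants, with the dichotomy according to whether $7\mid\operatorname{disc}$ of the CM-field). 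The upshot should be a finite, explicit list of $(j\text{-invariant}, L)$ pairs, and one checks that every entry either is $\calC$ or has $L = 6.0.214375.1$.

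Second, for cuspidal points I would invoke Proposition \ref{cusppoints}: in case 3. the field of definition $L$ satisfies $L\cap\Q(\zeta_7)=\Q(\sqrt{-7})$ by construction, so $C'$ has no rational cusps, with no exceptions. Combining the two parts gives $C'(\Q)=\emptyset$ under the stated hypotheses.

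The main obstacle will be the first part — being certain that the enumeration of elliptic curves realizing a case-3 representation is complete and that the excluded field $6.0.214375.1$ is exactly the one that slips through. This requires carefully matching the group-theoretic data in the classification of \cite{Elisa2} with the image classification of \cite{Zywina2015} and, for the CM contribution, consulting \cite{lmfdb} to confirm that the unique $7$-ramified CM case yields that specific sextic field and that it does carry a rational point. Everything else (the cusp argument, the reduction to Theorems \ref{rationalpoints} and \ref{thm:possibilities}) is formal and parallels Proposition \ref{Qptcase4}.
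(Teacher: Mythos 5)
Your overall architecture — run the proof of Proposition~\ref{Qptcase4} again, using Theorem~\ref{thm:possibilities} and Theorem~\ref{rationalpoints} for non-cuspidal points and Proposition~\ref{cusppoints} for cuspidal points, and then peel off the exceptions — is the right one and matches the paper. But you have misattributed both exceptions, and you invent a third one that does not occur.

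First, you claim that the non-CM curve with $j(E)=3^3\cdot5\cdot7^5/2^7$ (mod-$7$ image $S_3$) ``gives the twist $\calC$ itself.'' It does not: the paper states explicitly, just after Theorem~\ref{rationalpoints}, that $\calC$ does \emph{not} arise from any elliptic curve. The $S_3$ curve is exactly the source of the other excluded object — its $7$-torsion field is $6.0.214375.1$ (or a quadratic extension thereof), so that field is excluded precisely to kill the non-cuspidal rational point Theorem~\ref{rationalpoints} would otherwise produce. Second, you claim the CM curves with $7\mid\operatorname{disc}$ account for the field $6.0.214375.1$. This is wrong and, more to the point, irrelevant: by Theorem~\ref{thm:possibilities} any CM curve has $\bar\rho_{E,7}$ of cardinality $7$ or larger than $12$, never the order-$6$ image $S_3$, so CM curves simply cannot land in case~3 at all. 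There is no CM exception. Third, you assert that Proposition~\ref{cusppoints} applies uniformly ``with no exceptions'' in case~3. That is where $\calC$ actually enters: $\calC$ is precisely the case-3 twist for which the hypothesis $L\cap\Q(\zeta_7)=\Q(\sqrt{-7})$ of Proposition~\ref{cusppoints} fails, and $\calC$ does have rational cuspidal points. This is why the statement assumes $C'\not\simeq_\Q\calC$. As written, your proof would let a rational cuspidal point on $\calC$ slip through while excluding a non-existent CM twist, so the argument does not close.
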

\begin{proof}
The same proof than in previous proposition applies except that we have to remove the special case in which the elliptic curve has $j$-invariant $3^3\cdot5\cdot7^5/2^7$, in this case the 7-torsion is defined over the field $6.0.214375.1$ or a quadratic extension of it and we need to remove the single case in which the conditions of Proposition \ref{cusppoints} are not hold, namely for the Klein quartic that it does have rational cuspidal points.
\end{proof}


As a consequence of the previous results, we conclude:

\begin{thm}\label{thm:cond}
Let $f(x)= x^3+Bx+C\in\Z[x]$ be an irreducible polynomial such that all of the following conditions hold
\begin{enumerate}[(i)]
\item $2\nmid C$,
\item $3\nmid B$,
\item $7\mid B$ and $7\mid\mid C$,
\item $p\mid B$ and $p\mid \mid C$ for any prime $p\neq 2,3,7$ dividing $-4B^3-27C^2$.
\item The splitting field $\Q_f\not\simeq\Q(\zeta_7)$ nor the field $6.0.214375.1$. 
\end{enumerate}
Then $f$ gives a twist of the Klein quartic (constructed as in Subsection \ref{sec:good}) that is a counterexample to the Hasse principle.
\label{thmCase38}
\end{thm}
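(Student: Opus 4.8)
The plan is to assemble the local and global results already established into a single verification. Given such an $f$, Subsection \ref{sec:good} produces a twist $C'$ of the Klein quartic whose field of definition is $L = \Q(\sqrt{-7})(\alpha,\beta,\gamma)$, and whose equation $F$ is the one displayed there with $\Delta = -4B^3-27C^2$. To decide which of cases 3, 4 or 8 we are in, we look at the discriminant of the splitting field $\Q_f$: by hypothesis (v) it is not $\Q(\zeta_7)$, and condition (iii) forces $7\mid\Delta$ with $7\nmid\mid\Delta$ in a controlled way, so after stripping square factors the discriminant of $\Q_f$ is either $-7q^2$ (case 3) or $q^2$ (case 4) or neither (case 8); in all three cases the twist is one of those treated above.

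Next I would check local solubility place by place. For $p\neq 2,3,7$: if $p\nmid\Delta$ then by the determinant computation $C'$ has good reduction at $p$, so Proposition \ref{local-points} and its corollary give $C'(\Q_p)\neq\emptyset$; if $p\mid\Delta$ then, since $\Delta = -4B^3-27C^2$, $p\mid\Delta$ together with $p\neq 2,3$ forces either $p\mid B$ (whence $p\mid C$) or $p\nmid B$ — but one checks $p\mid\Delta$ and $p\nmid B$ is impossible unless $p\mid C$ too, so in fact $p\mid B$, and then hypothesis (iv) says $p\mid\mid C$, so the last Proposition before Proposition \ref{Qptcase4} gives $C'(\Q_p)\neq\emptyset$. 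For $p=2$: hypothesis (i) gives $2\nmid C$, so the first Lemma of the subsection gives $C'(\Q_2)\neq\emptyset$. For $p=3$: hypothesis (ii) gives $3\nmid B$, so the second Lemma gives $C'(\Q_3)\neq\emptyset$. For $p=7$: hypothesis (iii) gives $7\mid B$ and $7\mid\mid C$, so the Proposition on $\Q_7$-points applies and $C'(\Q_7)\neq\emptyset$. Finally, the Proposition in the section on real points gives $C'(\R)\neq\emptyset$. Hence $C'$ is everywhere locally soluble.

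For the global obstruction I would invoke the rational-points results. In case 4 or case 8, Proposition \ref{Qptcase4} gives $C'(\Q)=\emptyset$ outright. In case 3, the relevant Proposition gives $C'(\Q)=\emptyset$ provided $C'\not\simeq_\Q\mathcal{C}$ and the field of definition is not $6.0.214375.1$; hypothesis (v) excludes the field $6.0.214375.1$, and excludes $\Q_f\simeq\Q(\zeta_7)$ which is the only way (for this family, with the $A=0$ normalization and the cusp analysis of Proposition \ref{cusppoints}) that $C'$ could be $\Q$-isomorphic to $\mathcal{C}$ — so I would argue that under (v) the twist is not $\Q$-isomorphic to the Klein quartic. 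Combining, $C'(\Q)=\emptyset$ while $C'$ has points over $\R$ and over every $\Q_p$, i.e. $C'$ violates the Hasse principle.

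The only genuinely delicate point is the bookkeeping in the previous paragraph's case analysis: making sure that conditions (i)–(v) land us squarely inside the hypotheses of each cited proposition, and in particular verifying that hypothesis (v) really does rule out $C'\simeq_\Q\mathcal{C}$ in case 3. Everything else is a direct application of results already proved; the divisibility gymnastics for primes $p\mid\Delta$ (showing $p\mid B$ automatically, so that (iv) is the right hypothesis) is the one spot where a short argument using $\Delta=-4B^3-27C^2$ and $p\neq 2,3$ is needed, but it is routine. One should also note that there are infinitely many such $f$ — for instance one can produce infinitely many irreducible $x^3+Bx+C$ meeting (i)–(iv) by a congruence/sieve argument and then check (v) holds for all but finitely many — which is what makes the ``conjecturally infinitely many'' of Theorem \ref{thm:main} plausible, though strictly only finitely many are needed to prove the present statement as worded.
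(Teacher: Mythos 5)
Your proposal is correct in substance and follows exactly the paper's implicit argument (the paper gives no separate proof of Theorem~\ref{thm:cond}, just the phrase ``as a consequence of the previous results''); each condition (i)--(v) is slotted into the corresponding lemma or proposition, plus good reduction and Proposition~\ref{local-points} for the remaining primes and the real-points proposition at infinity.

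One small but genuine error of reasoning, however: in the analysis of primes $p\neq 2,3,7$ with $p\mid\Delta$, you try to \emph{derive} $p\mid B$ from $p\mid\Delta$, claiming ``$p\mid\Delta$ and $p\nmid B$ is impossible unless $p\mid C$ too.'' This is backwards. If $p\nmid B$ and $p\neq 2,3$, then $p\mid\Delta = -4B^3 - 27C^2$ forces $27C^2 \equiv -4B^3 \not\equiv 0 \pmod p$, hence $p\nmid C$ as well; so $p\mid\Delta$, $p\nmid B$, $p\nmid C$ can all hold simultaneously (e.g.\ $B=C=1$, $\Delta=-31$). The divisibility $p\mid B$ is not a consequence of $p\mid\Delta$; it is stated outright as half of hypothesis (iv), which you have misread as only supplying $p\mid\mid C$. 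Fortunately this does not break the proof: reading (iv) correctly gives both $p\mid B$ and $p\mid\mid C$, which is precisely the hypothesis of the local-solubility proposition for $p\mid\Delta$. Everything else — the case sorting among 3, 4, 8 by the square class of $\Delta$, and the use of condition (v) to land inside the hypotheses of the two rational-point propositions — is fine.
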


\begin{remark} Notice that the condition (iii) in Theorem \ref{thm:cond} implies that $f$ is irreducible because it is Eisenstein at $7$.
\end{remark}

In order to get polynomials $f$ satisfying all the conditions above we need $B=\pm2^a7^b\prod p_i^{e_i}$ and $C=\pm 3^c7\prod p_i$ with $e_i,b\geq 1$, $a,c\geq0$, the $p_i\neq 2, 3, 7$ prime and such that the discriminant is only divisible by the primes $7$ and $p_i$. This last condition yields an equality of the form: $3^{3+2c}\pm 1=2^{3a+2}7^{3b-2}p_i^{3e_i-2}$. In order to get the left hand size of this equality to be $0$ modulo $4$ we need the first sign to be positive, and in order to be $0$ modulo $7$ we need $3\mid c$. Actually, if $c=3c'$ with $c'\neq 3\text{ mod }7$ then $4\mid\mid3^{3+6c'}+1$ and $7\mid\mid 3^{3+6c'}+1$.

\begin{con} There are infinitely many values of $c'\neq 3\text{ mod }7$ such that all the primes different from $2$ and $7$ in the factorization of $3^{3+6c'}+1$ appear with an exponent  that is $1\text{ mod }3$.
\end{con}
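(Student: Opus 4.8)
As the statement is conjectural, we describe the route we would take and where it breaks down.

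\emph{Reduction to square-freeness.} Put $n=3+6c'$ and $N=3^{n}+1$. Since $n$ is odd, $3^{n}\equiv 3\pmod 8$, so $v_{2}(N)=2$; also $3\nmid N$; and, as observed above, $v_{7}(N)=1$ precisely when $c'\not\equiv 3\pmod 7$, the range considered in the statement. Thus for admissible $c'$ the integer $M:=N/28$ is a positive integer coprime to $42$, and the conjecture asserts that $v_{p}(M)\equiv 1\pmod 3$ for every prime $p\mid M$. Equivalently, $M/\operatorname{rad}(M)$ is a perfect cube; in particular this is implied by the cleaner statement that $M$ is square-free. Hence it suffices to prove:
\[
\bigl(3^{3+6c'}+1\bigr)/28 \ \text{is square-free for infinitely many}\ c'\not\equiv 3\pmod 7 .
\]
We see no way to exploit the weaker ``cube after removing the radical'' condition, so we would aim directly for square-freeness.

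\emph{Local densities and the expected answer.} For an odd prime $p\neq 7$ the condition $p^{2}\mid 3^{n}+1$ says that $e_{p}\mid 2n$ and $e_{p}\nmid n$, where $e_{p}:=\ord_{p^{2}}(3)$; whenever it is solvable, the solutions $n$ form a single residue class modulo $e_{p}$, so the ``bad'' $n\le X$ for a fixed $p$ number at most $X/e_{p}+O(1)$, and intersecting with $n\equiv 3\pmod 6$ (minus $n\equiv 21\pmod{42}$) only helps. For every $p$ outside the Wieferich primes for the base $3$ one has $e_{p}=p\cdot\ord_{p}(3)$, and splitting the primes according to whether $\ord_{p}(3)$ exceeds $\sqrt{p}$ shows $\sum_{p}1/\bigl(p\,\ord_{p}(3)\bigr)<\infty$. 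The standard sieve heuristic, treating the events $p^{2}\mid 3^{n}+1$ as independent, then predicts that $(3^{n}+1)/28$ is square-free for a positive proportion of the admissible $n$, with density a convergent Euler product; so the conjecture should hold.

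\emph{Making it rigorous, and the main obstacle.} Primes $p$ below a fixed threshold $Y$ impose only finitely many congruence conditions, handled exactly by inclusion--exclusion once one checks, by a finite computation, that these small primes do not already rule out every admissible $n$. The difficulty is the tail $p>Y$: the crude bound $\sum_{p>Y}\bigl(X/e_{p}+O(1)\bigr)$ is useless because the $O(1)$ terms alone sum to $\gg\pi(X)$, and we have no unconditional control on how small $e_{p}$ can be for large $p$. Here we would invoke the $abc$ conjecture: applied to the cyclotomic factorisation $3^{n}+1=\prod_{d\mid 2n,\ d\nmid n}\Phi_{d}(3)$ --- whose large prime divisors $p$ satisfy $\ord_{p}(3)=d$, hence $p\equiv 1\pmod d$ and $p>d$ --- it forces the powerful part of $3^{n}+1$ to be $\ll_{\varepsilon}(3^{n})^{\varepsilon}$, and, together with these divisibility relations, it constrains the admissible $n\le X$ with $(3^{n}+1)/28$ not square-free to be $o(X)$ in number; this yields infinitely many good $c'$. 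The main obstacle is precisely that the unconditional form of this last step --- bounding the solutions of $p^{2}\mid 3^{n}+1$ with $p$ in the medium-to-large range --- is of the same difficulty as the square-freeness of Mersenne numbers and lies beyond present techniques, which is why the result is stated as a conjecture; short of $abc$, one would need some substitute forcing the powerful part of $3^{n}+1$ to be $o\bigl((3^{n})^{\delta}\bigr)$ for a fixed $\delta<1$ along a set of $n$ of positive density, and no such input is currently available.
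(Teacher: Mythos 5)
The paper provides no proof of this statement: it is posed as an open conjecture, with only the surrounding remarks fixing the $2$-adic and $7$-adic valuations ($4\,\|\,3^{3+6c'}+1$ and $7\,\|\,3^{3+6c'}+1$ when $c'\not\equiv 3\pmod 7$) and observing that an admissible factorization $2^2\cdot 7\cdot\prod p_i^{3e_i-2}$ would produce the desired $B,C$. You correctly recognized that there is nothing to prove here, and your computations of $v_2$, $v_7$, and the reduction of the conjecture to ``$M/\mathrm{rad}(M)$ is a cube'' (with square-freeness as the natural sufficient target) are accurate and consistent with the paper's remarks.

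Your heuristic and conditional discussion is also sound as far as it goes: the local count $p^2\mid 3^n+1 \Leftrightarrow n$ lies in a single residue class mod $e_p=\ord_{p^2}(3)$ (when solvable), the convergence of $\sum_p 1/(p\,\ord_p(3))$ outside Wieferich primes, and the $abc$-conditional bound on the powerful part of $3^n+1$ via the cyclotomic factorization $3^n+1=\prod_{d\mid 2n,\,d\nmid n}\Phi_d(3)$ are all the standard ingredients, and the obstacle you name --- controlling $\sum_{p>Y}$ where the $O(1)$ error terms swamp the main term, equivalently bounding the powerful part of $3^n+1$ unconditionally --- is precisely the well-known barrier (of the same flavor as square-freeness of $2^n-1$) that explains why the paper stops at a conjecture. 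In short: there is no gap in your work to point to, because the statement is genuinely open, and your assessment of where and why a proof would break down matches the implicit reasoning of the paper.
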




Any such $c'$ produces a factorization $3^{3+6c'}+1=2^{2}7p_i^{3e_i-2}$ giving $B=-7\prod p_i^{e_i}$ and $C=\pm 3^{3c'}7\prod p_i$ satisfying conditions $(i)-(iv)$ in Theorem \ref{thm:cond}.

\begin{exm}\label{ex} After checking condition $(v)$ in Theorem \ref{thm:cond}, we find that the polynomials  $x^3-7x\pm7$, $x^3-7\cdot19\cdot37x\pm3^3\cdot7\cdot19\cdot37$ and  $x^3-7\cdot31\cdot61\cdot271x\pm3^6\cdot7\cdot31\cdot61\cdot271$  yield different twists of the Klein quartic that are counterexamples to the Hasse principle.
\end{exm}

Accordingly to the folklore conjecture that for curves, under the assumption that $Sha(J)$ is finite, the Brauer-Manin obstruction is the only
obstruction to the Hasse principle, we should have that the obstruction to have rational points of the counter-examples given in Example \ref{ex} is due to this phenomenon. If we could find a degree 1 rational divisor in their jacobian, we could indeed prove it under the finite $Sha$ assumption \cite[p. 37]{Scharaschkin}. Unfortunately, we were not able to find such a divisor.

\bibliographystyle{alpha}
\bibliography{Paper}

\begin{thebibliography}{LLLGR21}

\bibitem[BCP97]{magma}
Wieb Bosma, John Cannon, and Catherine Playoust.
\newblock The {M}agma algebra system. {I}. {T}he user language.
\newblock {\em J. Symbolic Comput.}, 24(3-4):235--265, 1997.
\newblock Computational algebra and number theory (London, 1993).

\bibitem[Elk98]{Elkies98}
Noam Elkies.
\newblock The {K}lein quartic in {N}umber {T}heory.
\newblock {\em The Eightfold Way: The Beauty of Klein's Quartic Curve}, 35,
  1998.

\bibitem[FLR05]{JFern}
J.~{Fern\'{a}ndez}, J.~C. {Lario}, and A.~{Rio}.
\newblock On twists of the modular curves {$X(p)$}.
\newblock {\em Bulletin of the London Mathematical Society}, 37(3):342--350,
  2005.

\bibitem[HK03]{halberstadt2003}
E.~{Halberstadt} and A.~{Kraus}.
\newblock Sur la courbe modulaire {$X_E(7)$}.
\newblock {\em Experiment. Math.}, 12(1):27--40, 2003.

\bibitem[LLLGR21]{Elisa-Ritzenthaler}
Reynald Lercier, Qing Liu, Elisa Lorenzo~Garc\'{\i}a, and Christophe
  Ritzenthaler.
\newblock Reduction type of smooth plane quartics.
\newblock {\em Algebra Number Theory}, 15(6):1429--1468, 2021.

\bibitem[{LMF}22]{lmfdb}
The {LMFDB Collaboration}.
\newblock The {L}-functions and modular forms database.
\newblock \url{http://www.lmfdb.org}, 2022.
\newblock [Online; accessed 17 November 2022].

\bibitem[{Lor}17]{Elisa1}
E.~{Lorenzo Garc\'{i}a}.
\newblock Twists of non-hyperelliptic curves.
\newblock {\em Revista Matem\'{a}tica Iberoamericana}, 33(1):169--182, 2017.

\bibitem[{Lor}18]{Elisa2}
E.~{Lorenzo Garc\'{i}a}.
\newblock Twists of genus 3 non-hyperelliptic curves.
\newblock {\em International Journal of Number Theory}, 14(6):1785--1812, 2018.

\bibitem[MT10]{Meagher2010}
S.~{Meagher} and J.~{Top}.
\newblock Twists of genus three curves over finite fields.
\newblock {\em Finite fields and their applications}, 16(5):347--368, 9 2010.

\bibitem[{Ozm}12]{2009arXiv0911.4537O}
E.~{Ozman}.
\newblock Local points on quadratic twists of {$X_0(N)$}.
\newblock {\em Acta Arithmetica}, 152(4):323--348, 2012.

\bibitem[{Ozm}13]{2012arXiv1205.3424O}
E.~{Ozman}.
\newblock On polyquadratic twists of {$X_0(N)$}.
\newblock {\em Journal of Number Theory}, 133:3325--3338, 2013.

\bibitem[PSS07]{Stoll2005}
B.~{Poonen}, E.~F. {Schaefer}, and M.~{Stoll}.
\newblock Twists of {$X(7)$} and primitive solutions to $x^2+y^3=z^7$.
\newblock {\em Duke Math. J.}, 133:103--158, 2007.

\bibitem[Sch99]{Scharaschkin}
Victor Scharaschkin.
\newblock {\em Local-global problems and the {B}rauer-{M}anin obstruction}.
\newblock ProQuest LLC, Ann Arbor, MI, 1999.
\newblock Thesis (Ph.D.)--University of Michigan.

\bibitem[{Zyw}15]{Zywina2015}
D.~{Zywina}.
\newblock {On the possible images of the mod $\ell$ representations associated
  to elliptic curves over $\Q$}.
\newblock {\em ArXiv e-prints}, August 2015.

\end{thebibliography}

\end{document}